\newcommand{\R}{\mathbb R}
\newcommand{\C}{\mathbb C}
\renewcommand{\Re}{\mathop {\rm Re}\nolimits}
\def\b#1{\boldsymbol{#1}}
\newtheorem{theorem}{Theorem}
\newtheorem{lemma}[theorem]{Lemma}
\newtheorem{remark}[theorem]{Remark}
\begin{document}

\title[Rational methods for abstract linear initial boundary value problems without order reduction]{Rational methods for abstract linear initial boundary value problems without order reduction}


\author[1]{\fnm{Carlos} \sur{Arranz-Sim\'on}}\email{carlos.arranz@uva.es}
\author*[1]{\fnm{Bego\~na} \sur{Cano}}\email{bcano@uva.es}
%
\author[1]{\fnm{C\'esar} \sur{Palencia}}\email{cesar.palencia@uva.es}
%
\affil*[1]{\orgdiv{IMUVA, Departamento de Matem\'atica Aplicada}, \orgname{Universidad de Valladolid}, \orgaddress{\street{Paseo de Bel\'en, 7}, \city{Valladolid}, \postcode{47011}, \state{Spain}, \country{Spain}}}
%
%
%

\abstract{Given an $A$-stable rational approximation to $e^z$ of order $p$, numerical procedures are suggested to time integrate abstract, well-posed IBVPs, with time-dependent source term $f$ and boundary value $g$. These procedures exhibit the optimal order $p$ and can be implemented by using just one single evaluation of $f$ and $g$ per step, i.e., no evaluations of the derivatives of data are needed, and are of practical use at least for $p\le 6$. The full discretization is also studied and the theoretical results are corroborated by numerical experiments.}

\keywords{Order reduction, Runge--Kutta methods, rational methods, initial boundary value problems}


\pacs[MSC Classification]{65J10, 65M15}

\maketitle

\section{Introduction}\label{sec1}

It is well-known the phenomenom of order reduction which turns up when integrating evolutionary problems in partial differential equations with Runge--Kutta methods \cite{OR, OR2,SSVH, V}. Because of this, several techniques in the literature have been devised to avoid it.

Some of them are based on considering additional restrictions on the coefficients of the methods so that they not only satisfy the classical order conditions, but also some stiff order ones \cite{LV, LO} or, more recently, weak stage order conditions \cite{BKSS, BKSS2}. That implies less
freedom in the choice of coefficients, so that the error constants of the methods cannot be minimized in the same way, and also the number of stages which is required to obtain a certain order of accuracy may increase, and thus the computational cost of the method.

Another technique for linear problems was suggested in \cite{CP}, which consists of converting the problem, through the solution of several elliptic problems, to one for which order reduction is not observed. This procedure has the advantage to be valid for any method, but the solution of the corresponding elliptic problems also means a non-negligible computational cost.

On the other hand, another procedure is based on modifying the boundary values for the stages which are in some way preassumed \cite{AGC, A, A2, AC, AC2, AC3,CGAD, Pat}. That means very little computational cost because the number of nodes on the boundary is negligible with respect to the number of nodes on the whole domain. For linear problems, the expressions for the modified boundaries depend on spatial and time derivatives of data \cite{A2} (more particularly, the boundary condition and the source term). In many practical problems, such analytic expressions are not available, but just the values at some instants of time. Due to that, numerical differentiation is required to approximate the required modified boundary values for the stages, and it is well known that numerical differentiation is unstable when the grid is refined \cite{SS}. (For nonlinear problems, there exists the need to resort to numerical differentiation, even if analytic expressions of data are known, when the required order is high enough \cite{AC3}).

Recently, another technique has been suggested in \cite{ArP} in order to avoid order reduction by using rational methods (these include those associated to the stability function of Runge--Kutta methods). This technique does not require to impose any additional order condition nor does it require to resort to numerical differentiation of data either in space or in time. Nevertheless, this technique has just been applied and justified in \cite{ArP} for abstract linear initial value problems, but not for problems with time-dependent boundary conditions.

The aim of this paper is thus to suggest a technique to integrate linear initial boundary value problems (IBVPs) without order reduction starting from an $A$-stable rational approximation to the exponential, without requiring to impose any additional restrictions on the coefficients nor to use numerical differentiation of data. Moreover, the full discretization is also studied. To this end, an abstract framework which covers the usual spatial discretizations is considered.


The paper is structured as follows. Section 2 gives some preliminaries on the IBVPs. Then, some comments on the semigroup of translations and the way to deal with them in an approximated way is detailed in Section 3.  The lifted initial value problem which justifies the suggestion of the rational method is given in Section 4. In Section 5, the rational method applied to that lifted initial value problem is described and analysed. In Section 6, the suggested method for the time integration is completely given using the approximation for the part of the translations which was detailed in Section 3. In Section 7, an abstract framework for a general full discretization is described, as well as a thorough analysis of the method. Finally, in Section 8, some numerical experiments are shown which corroborate the previous results and a big gain in efficiency in comparison with the standard method of lines when using directly the Runge--Kutta method can be observed.

\section{The IBVP}
\label{SIBVP}
We briefly summarize some results in \cite{PA}. Let $X$ and $Y$ be  two complex Banach spaces and let us consider two linear operadors $A: D(A) \subset X \to X$ and $\partial: D(A) \subset X  \to Y$, such that $[A,\partial] ^T: D(A) \to X \times Y$  (i.e., $ [A,\partial] ^T(w) = [Aw,\partial w]^T$, for $w \in D(A)$) is closed.
Set
$$
D(A_0) = \mbox{Ker}(\partial)= \{ \, x \in D(A) \, : \, \partial x = 0 \, \}
$$
and let $A_0 : D(A_0) \subset X \to X$ be the restriction of $A$ to $D(A_0)$. In the context of PDEs, typically $X$ is an $L^q(\Omega)$ space, where $\Omega$ is a spatial domain with a piecewise regular boundary $\Gamma$, $A$ is a PD operator, and both $D(A)$ and   $Y$ are Sobolev spaces of functions defined on $\Omega$ and on $\Gamma$.   While the nature of $D(A)$ is induced by the PD operator $A$, the one  of $\partial$, that may contain or not derivatives (for instance, Neumann or Robin b.c. versus Dirichlet b.c.),  induces the adequate Sobolev space $Y$.

The basic hypotheses are:

\begin{enumerate}
\item[(H1)] $A_0 : D(A_0) \subset X \to X$ is the infinitesimal generator of a ${\cal C}_0$-semigroup  $\{S_{A_0}(t)\}_{t \ge 0}$ of linear and bounded operators in $X$. Therefore, $D(A_0)$ is dense in $X$ and there exist $M \ge 1$ and $\omega \in \R$ such that
$$
\| S_{A_0}(t) \| \le M \exp(\omega t), \qquad t \ge 0.
$$

\item[(H2)] There exists a bounded,  linear operator $E : Y \to X$, such that
$$
Ev \in D(A) \mbox{ and }  \partial E v = v, \qquad v \in Y,
$$
and $AE : Y \to X$ is  bounded. In the  PDEs' context,  $E$ is a linear  extension operator that, given $v$ in the Sobolev space $Y$, provides an element $w=Ev$ in the Sobolev space $D(A)$ such that $\partial w = v$.  The existence of $E$ is studied in the Extension Theory (see, e.g., \cite{Lions}).
\end{enumerate}

It turns out  \cite{PA} that, for $\Re(\lambda) > \omega$ and $v \in Y$, the eigenvalue problem
\begin{equation}
\label{eig}
\left\{
\begin{array}{lcl}
 A w &=& \lambda w,  \\
 \partial  w &=&v,
\end{array} \right.
\end{equation}
admits a unique solution, denoted by $w=K(\lambda) v$, that belongs to $D(A)$. This gives rise to a  bounded, linear operator $K(\lambda) : Y \to D(A)$, $\Re(\lambda) > \omega$,  that can be expressed, independently of the extension operator $E$, as
\begin{equation}
\label{extension}
K(\lambda) = [I - (\lambda I-A_0)^{-1} (\lambda I-A)] E, \quad \Re(\lambda)  > \omega.
\end{equation}
From this representation we readily obtain
$$
\| K(\lambda) \| \le \| E \| + \frac{|\lambda| \, \|E\| + \|AE \|}{\Re(\lambda)-\omega}, \quad \Re(\lambda) > \omega.
$$

The paper is mainly focused on the IBVPs
\begin{equation}
\label{problem}
\left\{
\begin{array}{lcl}
w' (t)&=& A w(t)+f(t), \quad  t\ge 0, \\
w(0)&=&w _0,  \\
\partial w(t)&=& g(t), \quad  t \ge 0,
\end{array} \right.
\end{equation}
with data $w_0 \in D(A)$, $f: [0,+\infty) \to X$ continuous, and $g:[0,+\infty) \to Y$ of class ${\cal C}^1$. In case this problem admits a genuine solution $w: [0,+\infty) \to D(A)$, we can justify that,
\begin{equation}
\label{necregD}
g'(t) = \partial w'(t)= \partial (A w(t) + f(t) ) , \qquad t \ge 0.
\end{equation}
In particular, for $t=0$, we see that the data must satisfy the so called natural compatibility condition
\begin{equation}
\label{necreg0D}
g'(0) = \partial (A w_0 +   f(0)),
\end{equation}
that turns to be  a necessary one  to have a ${\cal C}^1$ solution of (\ref{problem}). Higher regularity imposes more natural compatibility conditions on data. It can be proved \cite{PA} that (\ref{necreg0D}) is also a sufficient condition,  i.e., under it (\ref{problem}) admits a unique genuine solution $w$ and then  (\ref{necregD}) remains valid for $t \ge 0$. Moreover,  the IBVP (\ref{problem}) is well posed \cite{PA} in the sense that the genuine solution $w$ depends continuously on  $w_0 \in D(A)$, $f  \in {\cal C}([0,+\infty),X)$, $g \in {\cal C}^1([0,+\infty),Y)$,  (for $f$ we consider the $L_1$ norm and for $g$ the total variation one), so that we can consider generalized solutions of (\ref{problem}) in the framework of $X \times L^1_{loc}([0,+\infty),X) \times BV_{loc}([0,+\infty),Y)$.

It is clear that, for $\Re(\lambda) > \omega$, the solution of (\ref{problem}) can be expressed as
$$
w(t) = u(t)+ K(\lambda ) g(t), \qquad t \ge 0,
$$
where, since  $\partial  u (t) = 0$, $t \ge 0$, and $A K(\lambda) = \lambda K(\lambda)$, the term $u: [0,\infty) \to X$ solves
$$
\left\{
\begin{array}{lcl}
u'(t) &=& A_0  u(t) + \lambda K(\lambda) g(t) + f(t) -K(\lambda) g'(t), \qquad t \ge 0, \\
u(0) &=& w_0-K(\lambda) g(0),
\end{array} \right.
$$
so that,  when $\omega <0$,  the choice $\lambda=0$ results in the simpler IVP
\begin{equation}
\label{equdotu}
\left\{
\begin{array}{lcl}
u'(t) &=& A_0  u(t) + f(t) -K(0) g'(t), \qquad t \ge 0, \\
u(0) &=& w_0-K(0) g(0).
\end{array} \right.
\end{equation}
Alternatively, the IBVP (\ref{problem}) can also be reduced to an IVP by using an available extension operator $E$, instead of $K(0)$, but then we must use the source term $f+AEg-Eg'$.

Let us notice that we can easily  reduce the problem to the situation $\omega < 0$. To this end, we just select $\alpha >  \omega$ and express the solution $w$ of the IBVP (\ref{problem}) in the form
$$
w(t) = \mbox{e}^{t \alpha} w_\alpha(t), \qquad t \ge 0,
$$
where $w_\alpha: [0,+\infty) \to X$ is the solution of the conjugate problem
$$
\left\{
\begin{array}{lcl}
w_\alpha'(t)&=& (A-\alpha I) w_\alpha (t)+\mbox{e}^{-t \alpha} f(t), \quad  t\ge 0, \\
w_\alpha(0)&=&w_0,  \\
\partial w_\alpha(t)&=& \mbox{e}^{-t \alpha} g(t), \quad  t \ge 0.
\end{array} \right.
$$
The generator of the above problem is $A_0 - \alpha I$ whose spectral abcissa is $\omega -  \alpha < 0$. This reduction simplifies the presentation and it is also interesting from the numerical point of view. On the one side, no restriction on the used step size is required and, on the other, the basic estimates in \cite{BT} involving rational approximations are simpler when $\omega \le 0$. Actually, when $\omega <0$ we can conjugate with $0 < \alpha < |\omega|$ and in this way introduce an exponential damping in the  mentioned basic estimates. Thus, in the rest of the paper, we will make the simplifying assumption that $\omega < 0$ and consider the IVP (\ref{equdotu}).

In Section \ref{SFD} we treat the full discretization of (\ref{problem}) and, to deal with the spatial consistency,  we will introduce  two Banach spaces $(W, \| . \|_W)$ and $(Z, \| \cdot \|_Z)$, continuously embedded in $X$ and $Y$, such that
\begin{equation}
\label{WZ}
W \subset D(A) \quad \mbox{\rm and} \quad K(0)  Z \subset W.
\end{equation}
Under these conditions, the restriction of $K(0)$, resp. $\partial$, to $Z$, resp. to $W$, are continuous from $Z$ to $W$, resp., from $W$ to $Z$. In the common applications, $W$ is and $Z$ are Sobolev spaces,  with norms finer than those of $X$ and $Y$.  The Extension Theory is the tool to provide the existence of an extension operator  $E: Z \to W$ such that $\partial E = I$. Once $E$ is obtained,
(\ref{extension}) shows that $K(0)\partial $ leaves $W$ invariant, as soon as $A_0^{-1}(A W) \subset W$, that is the usual situation.

It is important to remark that, for boundary data $g:[0,+\infty) \to Y$ of class ${\cal C}^1$, the IBVP (\ref{problem}) also makes sense in the framework of $W$. Actually, (\ref{problem}) admits a genuine solution $w: [0,+\infty) \to W$ if, and only if, the natural compatibility condition (\ref{necreg0D}) is satisfied, in which case (\ref{necregD}) remains valid. In fact, if we set $W_0 = W \cap \mbox{\rm Ker}(\partial)$, endowed with norm $\| \cdot \|_{W_0}$ induced by $\| \cdot \|_W$, (\ref{equdotu}) is a standard, non-homogeneous, linear problem in $W_0$. Besides, for $t \ge 0$, it is clear that
\begin{eqnarray}
\label{cotaW0u}
\| u(t) \|_{W_0} \le \| w(t) \|_W + \| K(0) g(t) \| \le  \| w(t) \|_W + \| K(0) \partial \|_{W \to W} \| w(t) \|_W,
\end{eqnarray}
an important estimate when considering the spatial discretization. Notice also that this estimate implies that the restriction of $S_{A_0}(t)$, $t \ge 0$, to $W_0$, forms a ${\cal C}_0$-semigroup on $W_0$, whose infinitesimal generator is the restriction of  $A_0$ to $W_0$.

In the first sections of the manuscript we study the time discretization of  IVPs with the more general format
\begin{equation}
\label{equdot}
\left\{
\begin{array}{lcl}
u'(t)&=& A_0 u(t)+f(t)-Kg'(t), \quad t \ge 0, \\
u(0)&=& u_0:=w_0-Kg(0),
\end{array}
\right.
\end{equation}
where $K : Y \to X$ a bounded, linear operator. The situation $K = K(0)$ is already motivated  and the wider format of IVP (\ref{equdot}) may occur  in other applications.

\section{The semigroup of translations}
\label{SST}
In this section $(Z, \| \cdot \|)$ stands for a general complex Banach (not the one which has been introduced in the previous section).  For $m \ge 0$, let ${\cal C}_{ub}^m([0,+\infty),Z)$ denote the space formed by all the ${\cal C}^k$ mappings $h : [0,+\infty) \to Z$ such that  $h^{(j)}$, $0 \le j \le m$, are bounded and uniformly continuous on $[0,\infty)$.
Set, for $m \ge 0$ and $0 \le t \le +\infty$,
\begin{equation}
\label{defnorm}
\| h \|_{m, t} = \max_{0 \le j \le m} \sup _{0 \le s \le t} \| h^{(j)} (s) \|.
\end{equation}
The space ${\cal C}_{ub}^m([0,+\infty),Z)$, endowed with the norm $\| \cdot \|_{m,\infty}$, is a Banach space.

For $t \ge 0$,  let $T(t) : {\cal C}_{ub}([0,+\infty),Z) \to {\cal C}_{ub}([0,+\infty),Z)$ be the shift operator
$$
[T(t) h] (s) = h(t+s), \qquad h \in {\cal C}_{ub}([0,+\infty),Z), \quad s \ge 0.
$$
The familly $\{ T(t) \}_{t \ge 0}$ is a strongly continuous semigroup of contractions on  ${\cal C}_{ub}([0,+\infty),Z)$ whose generator $B$ (see, e.g., Lemma 4.1 in \cite{ArP}) acts on the domain
$B : D(B) =  {\cal C}_{ub}^1([0,+\infty),Z)  $ as $B h = h'$, for  $h \in D(B)$.
Therefore,  the restrictions of $T(t) $, $ t \ge 0$, to  $D(B) =  {\cal C}_{ub}^1([0,+\infty),Z) $ forms another strongly continuous semigroup, whose generator is the restriction of $B$ to $D(B^2) = {\cal C}_{ub}^2([0,+\infty),Z)$.

The procedure that we will suggest in Section~\ref{SM} relies on Lemma~4.3 in  \cite{ArP} that, for the convenience of both the reader and  presentation, we comment next.

By Hille-Yosida Theorem, the spectrum of $B$ is contained in the half plane $\Re(z) \le 0$. Thus see, e.g. \cite{BT} and (\ref{rz}) below, given a rational mapping $R(z)$ without poles on such a half plane,  it makes sense to consider the linear and bounded operators  $R(\tau B)$, $ \tau >0$, on $ {\cal C}_{ub}([0,+\infty),Z)$. On the other hand, let $L :  {\cal C}_{ub}([0,+\infty),Z) \to Z$ denote the delta operator
$$
L h =h(0), \qquad h \in {\cal C}_{ub}([0,+\infty),Z),
$$
so that $ h(t) = h(t+s)|_{s=0}=L T(t) h $, for $t \ge 0$ .

The question that arises in Section~\ref{SM} is how to accurately compute expressions of the form
\begin{equation}
\label{goal}
L R(\tau B) T(t)h, \qquad \tau > 0, \quad t \ge 0,
\end{equation}
by using only evaluations of $h \in {\cal C}_{ub}([0,+\infty),Z)$. After developing $R(z)$  into simple fractions (see, e.g., (\ref{rz})), what we need is to solve a certain number of ODEs of the form
\begin{equation}
\label{edo}
\phi (t)- \tau w \phi'(t) = \psi (t), \qquad t \ge 0,
\end{equation}
with $\Re(w) >0$ and datum $\psi \in {\cal C}_{ub}([0,+\infty),Z)$, under the additional condition that $\phi$ must belong to ${\cal C}_{ub}([0,+\infty),Z)$, that is not  a non-trivial computational task. Let us notice that only $\phi(0)$ would be  required in (\ref{goal}). Actually, overcoming this difficulty is the aim of Lemma~4.3 in \cite{ArP}.

We first introduce some notation. For $\b v \in \R^m$, we use the  standard notation $\b v \ge 0$ to indicate that all the components of $\b v$ are $\ge 0$. Given $\b c \in \R^q$, $q \ge 1$, $v : [0,+\infty) \to Z$, $t \ge 0$,  $\tau >0$ such that $t+\tau \b c \ge 0$,  and a mapping $h : [0,+\infty) \to Z$, we set $h(t+\tau \b c)=[h(t + \tau c_1), h(t+\tau c_2), \ldots, h(t+\tau c_q)]^T \in Z^q$. Notice that we do not assume that $\b c \ge 0$. Moreover, for a vector $\b \gamma \in \C^q$, we set $\b \gamma^T \cdot h(t+\tau \b c) =\sum_{i=1}^q \gamma_i h(t +\tau c_i) \in Z$.  Finally,  $\R^q_d$ stands for  the set formed by all the vectors $\b c \in \R^q$ with $q$ different components, i.e., such that $c_i \ne c_j$, for $i \ne j$, $1 \le i, \, j \le q$.

Let us fix $t >0$, $\tau >0$, $q \ge 1$,  and select $\b c \in \R^q_d$ such that $t +\tau c_i \ge 0$, $1 \le i \le q$.
Then we consider the Taylor expansion $R(z) = R_0 + R_1 z + \cdot + R_{q-1} z^{q-1} + O(z^q)$, up to a given order $q$. It is clear that the Vandermonde system
\begin{equation}
\label{Vander}
c_1^j \gamma_1+\dots+c_q^j \gamma_q= j! \,R_j, \quad 0 \le j \le q-1,
\end{equation}
admits a unique solution $\b \gamma \in \C^q$.  The forenamed Lemma~4.3 in \cite{ArP} (based on the results in \cite{BT}) asserts that
$$
\|R(\tau B)h(t+\cdot)-\b \gamma^T h(\cdot+\tau \mathbf{c})\|\le \kappa \tau^q \|B^q h\|, \qquad h \in D(B^q),
$$
for some $\kappa >0$. We notice that both $\b \gamma$ and $\kappa$ depend on $R(z)$ and $\b c$, but not on $\tau$.  Once we have obtained $\b \gamma$, we have
$$
LR(\tau B)T(t)  h =L R(\tau B) h(\cdot + t) \approx L \b \gamma ^T h(\cdot+\tau \b c)=
\b \gamma^T h(\tau \b c),
$$
so that
\begin{equation}
\label{lema43}
\| LR(\tau B)T(t)  h - \b \gamma ^T h(\tau \b c) \| \le \kappa \tau^q \|B^q h\|, \qquad h \in D(B^q),
\end{equation}
meaning that $ \b \gamma^T h(t+\tau \b c)$ is an approximation to (\ref{goal})  of order $q$ that uses $q$ evaluations of $h$, as desired.

Whereas for linear,  non-homogeneous problems ($g =0$) in (\ref{problem}), the above semigroup was used in \cite{ArP} with $Z=X$, in the present context of (\ref{equdot}) we will also use it with $Z=Y$.

\begin{remark}
\label{kappa}
The behaviour of $\kappa$ is  related to the particular choice of the nodes $\b c$. In \cite{AP, Tesis}, a more in-depth study of this influence is carried out. In particular, there it is proved that if $M = M(\b c)$ is the coefficients matrix in linear system (\ref{Vander}) with the rows scaled by $j!$,  $\kappa$ can be taken as
$$
C+\frac{1}{(q-1)!}\|M^{-1}\|_{\infty} \|[R_0,\dots,R_{q-1}]^T\|_{\infty} \max_{1 \leq k \leq q}  \left|c_k\right|^q,
$$
where the constant $C$ is independent of  $\b c$.

On the one hand, it can be checked that $\rho=\|M^{-1}\|_{\infty}$ at the nodes $\b c=[-q+1,-q+2,\dots,0]$ take the values $[3,6.5,13.25,32.25,76.94,199.04,511.31]$ for $3 \le q \le 9$, while at the $q$ centered nodes $-(q-1)/2+[0:q-1]^T$, these values are $[2, 2.94, 3.75, 5.78, 8.03, 13.15, 20.27]$ (c.f. \cite{G}). Thus, for $q\le 7$, even with the first choice, the calculation of $\b \gamma$ in (\ref{Vander}) can be made without introducing big round-off errors.

On the other hand, the factor $\max_{1 \leq k \leq p} \left|c_k\right|^p$ suggests that, among all equispaced nodes, the most central ones are those which minimize $\kappa$. Chebyshev nodes also have good properties in this point but gives rise to a more expensive method per step as a whole, as stated afterwords in Remark \ref{naturalchoice}.
\end{remark}

\section{The lifted IVP}
\label{SL}
Following the approach in \cite{ArP}, given a linear and bounded operator $K : Y \to X$, we embed the non homogeneous IVP (\ref{equdot}) into an enlarged, homogeneous one. To this end, we consider both the semigroup of translations on  ${\cal C}_{ub}([0,+\infty),X)$ and on ${\cal C}_{ub}([0,+\infty),Y)$, that are denoted by $T_0(t)$ and $T_1(t)$, $t \ge 0$, respectively. Accordingly, their generators are denoted by $B_0$ and $B_1$. Recall that $D(B_0^m) = {\cal C}_{ub}^m([0,+\infty),X)$, $D(B_1^m) = {\cal C}_{ub}^m([0,+\infty),Y)$, for $ m \ge 0$.

Let $L_0 : {\cal C}_{ub}([0,+\infty),X)  \to X$ and $L_1 : {\cal C}^1_{ub}([0,+\infty),Y)  \to Y$ be the delta operators
$$
L_0 f = f(0), \, \quad f \in  {\cal C}_{ub}([0,+\infty),X); \qquad L_1 g =g(0), \quad g \in  {\cal C}^1_{ub}([0,+\infty),Y).
$$

The product $H = X \times  {\cal C}_{ub}([0,+\infty),X) \times {\cal C}^1_{ub}([0,+\infty),Y)$, endowed with the norm
$$
\| [u,f,g]^T \|=  \| u \| + \| f \|_\infty+ \max\{ \| g \|_\infty, \| B_1 g \|_\infty \}, \qquad [u,f,g]^T \in H,
$$
is a Banach space.  On the domain $D(G) = D(A_0) \times D(B_0) \times D(B_1^2) \subset H$, let us define  the operator $G: D(G) \subset H \to H$ by
$$
G= \left( \begin{array}{ccc} A_0 & L_0 & -K L_1B_1\\ 0 & B_0 & 0 \\ 0 & 0 & B_1 \end{array}\right),
$$
and consider the linear, homogeneous IVP on $H$
\begin{equation}
\label{hom}
\left\{ \begin{array}{lcl}
U'(t) &=& GU(t) \qquad t \ge 0, \\
U(0) & = & U_0\in D(G).
\end{array} \right.
\end{equation}
Writing $U(0)=[u_0,f,g]^T$,  $U(t) = [u(t),\phi(t),\psi(t)]^T \in H$, $t \ge 0$, the last two components  trivially yield
$$
\phi(t) = T_0 (t) f= f(t+\cdot) , \qquad \psi (t) = T_1(t) g=g(t+\cdot),
$$
while the first one fits into the equation
$$
u'(t) = A_0 u(t) + f(t) - K g'(t), \qquad t \ge 0.
$$
These remarks show readily that (\ref{hom}) admits a unique genuine solution that, by using  the  variation-of-constants formula, can be represented as
$
U(t) = S_G(t) U(0), \, t \ge 0,
$
where $S_G(t) : H \to H$, $t \ge 0$, is the linear operator
$$
S_G(t)=\left( \begin{array}{ccc}S_{A_0}(t) &\,\, \int_0^t  S_{A_0}(t-s) L_0 T_0(s)\, \mbox{d} s\,\, &
\,\,-\int_0^t  S_{A_0}(t-s) K L_1 B_1T_1(s) \,\mbox{d} s \\
0 & T_0(t) & 0 \\
0 & 0 & T_1(t) \end{array} \right) ,$$
(the integrals are understood in the strong sense). Clearly, $S_G(t)$, $t \ge 0$, is a strongly continuous semigroup on $H$ and
\begin{eqnarray}
\|S_G(t)\|\le M(1+2t).
\label{boundSG}
\end{eqnarray}

It will be useful to introduce the family of seminorms $||| \cdot |||_{m,t}$ in the product space ${\cal C}_{ub}^m([0,+\infty),X) \times  {\cal C}_{ub}^m([0,+\infty),X) \times  {\cal C}_{ub}^{m+1}([0,+\infty),Y)$ given, for $[v,\phi,\psi]^T$ in such a product, by the expression
\begin{equation}
\label{defseminorm}
||| [v,\phi,\psi]^T |||_{m,t} =\| v\|_{m,t} + \| \phi\|_{m,t}+  \| \psi\|_{m+1,t}
\end{equation}
($\| \cdot\|_{m,t}$ is defined in (\ref{defnorm})).

Let us stress that, for $m \ge 1$,  $U \in D(G^{m})$ if, and only if, $S_G(\cdot)U_0 \in {\cal C}_{ub}^{m}([0,+\infty),X)$. This is equivalent to have $u \in {\cal C}_{ub}^{m}([0,+\infty),X)$,  $f \in {\cal C}_{ub}^{m}([0,+\infty),X)$ and $g\in {\cal C}_{ub}^{m+1}([0,+\infty),Y)$. Therefore, under such smoothness conditions on $u$, $f$ and $g$, the solution $U $ of (\ref{hom}) takes values in $D(G^m)$ and, since $G^m U(s) =   U^{(m)}(s) $, for $s \ge 0$, it turns out that
\begin{equation}
\label{seminorm}
\| G^m U (s) \| = \| U^{(m)}(s)  \| \le ||| U |||_{m,t}, \qquad 0 \le s \le t.
\end{equation}

Notice that, for $m \ge 1$,  $D(G^m) \subset D(A_0^m) \times {\cal C}_{ub}^{m}([0,+\infty),X) \times  {\cal C}_{ub}^{m+1}([0,+\infty),Y)$, but  the equality is only true for $m =1$. Actually,  to guarante that  $U \in {\cal C}_{ub}^{m}([0,+\infty),X)$ we must  impose several compatibility conditions on the initial data.

\section{Rational methods for the lifted IVP}
\label{SRML}
Henceforth,  $r(z)$ will denote  an A-aceptable approximation to the exponential of order $p \ge 1$, that is,
$| r(z) | \le 1$, for $ \Re(z) \le 0$, and $r(z) - \mbox{e}^z = O(z^{p+1})$, as $z \to 0$.

The expansion of $r(z)$ into simple fractions is of the form
\begin{eqnarray}
r(z)=r_{\infty}+\sum_{l=1}^k \sum_{j=1}^{m_l} \frac{r_{lj}}{(1-z w_l)^j},
\label{rz}
\end{eqnarray}
for certain complex coeficcients $r_\infty$, $r_{lj}$, $1 \le l \le k$, $1 \le j \le m_l$, and values $w_l\in \mathbb{C}$ with $\mbox{Re}(w_l)>0$, for $1\le l \le k$.  The number of poles of $r(z)$ (including its multiplicity) is $s=\sum_{1 \le l \le k} m_l$.

Since we ssume that $\omega <0$, Hille--Phillips Theorem implies that the linear operator
\begin{equation}
\label{ratfun}
r(\tau G)
 = r_\infty I + \sum_{l=1}^k \sum_{j=1}^{m_l} r_{lj}(I-\tau w_lG)^{-j}, \qquad \tau >0,
\end{equation}
is well defined and uniformly bounded for $\tau >0$.

Given an initial value $U_0 \in H$ and $\tau >0$, the recurrence
\begin{equation}
\label{ratmet}
U_{n+1} = r(\tau G) U_n, \qquad n \ge 1,
\end{equation}
defines  the numerical approximation  $U_n \in H$ to  $U(t_n)$,  at $t_n = n \tau$, by means of the   rational method based on $r(z)$.

Let us explore the form of $r(\tau G)$. For $\Re (z) >0$, we check that
$$
(I -z G)^{-1} = \left( \begin{array}{ccc}(  I -zA_0)^{-1} & zQ_{0,1}(z) &-\,Q_{1,1}(z) \\ 0 & ( I - zB_0)^{-1}& 0 \\ 0 & 0 & (I -z B_1)^{-1} \end{array}\right),
$$
where
$$
Q_{0,1}(z)= (I-zA_0)^{-1} L_0(I-zB_0)^{-1} ,\quad  Q_{1,1}(z) =  (I-z A_0)^{-1} K L_1 z B_1 (I-zB_1)^{-1} .
$$
For $j \ge 1$, let us set
$$
(I -z G)^{-j} = \left( \begin{array}{ccc}(  I -zA_0)^{-j} & z Q_{0,j}(z) &-\,Q_{1,j}(z) \\ 0 & ( I - zB_0)^{-j}& 0 \\ 0 & 0 & (I -z B_1)^{-j} \end{array}\right), \qquad j \ge 1
$$
for suitable operators $Q_{m,j}(z)$, $m =0,\, 1$.  By induction, using the variation-of-constants formula, it is easy to conclude that, for $j \ge 1$,
\begin{equation}
\label{powersresol}
\left\{ \begin{array}{lcl}
Q_{0,j}(z) &=&\sum_{i=1}^j (I-zA_0)^{-j+i-1}L_0(I-zB_0)^{-i} ,\\[3pt]
Q_{1,j}(z) &=&  \,\sum_{i=1}^j (I-zA_0)^{-j+i-1}K L_1  z B_{1}(I-zB_{1})^{-i} .
\end{array} \right.
\end{equation}

This result, used  in (\ref{ratmet}) with $z=\tau w_l$, $1 \le l \le k$, $1 \le j \le m_l$,  readily yields
\begin{equation}
\label{expresionr}
r(\tau G) = \left( \begin{array}{ccc}r(\tau A_0) & \tau E(\tau) & -\,F(\tau) \\ 0 & r(\tau B_0) & 0 \\ 0 & 0 & r(\tau B_1) \end{array}\right),
\end{equation}
where
\begin{eqnarray}
\label{E}
E(\tau)&=&\sum_{l=1}^k \sum_{j=1}^{m_l} r_{lj} w_l \sum_{i=1}^j (I- \tau w_l A_0)^{-j+i-1}L_0  (I- \tau w_l B_0)^{-i}, \\
\label{F}
F(\tau)&=&\sum_{l=1}^k \sum_{j=1}^{m_l} r_{lj} w_l \sum_{i=1}^j (I- \tau w_l A_0)^{-j+i-1} K(0) L_1 \tau B_1 (I- \tau w_l B_1)^{-i}.
\end{eqnarray}

In summary, for a given initial datum $U_0 =[u_0,f,g]^T \in H$, the rational method (\ref{ratmet}) generates approximations $U_n = [u_n,f_n,g_n]^T$, $n \ge 0$, with $f_n = r(\tau B_0)^n f$, $g_n = r(\tau B_1)^n g$ and the first components are provided by the recurrence
\begin{equation}
\label{methodrat}
u_{n+1} = r(\tau A_0) u_n + \tau E(\tau)r(\tau B_0)^n f - F(\tau)r(\tau B_1)^n g, \qquad n \ge 0,
\end{equation}
Thus, one step requires solving $s:=\sum_{l=1}^k m_l$ linear systems with the different operators $(I-\tau w_l A_0)$, $1 \le l \le k$, something that we can assume to be numerically affordable whithin some tolerance, but, as we commented, it  also requires solving linear systems with the operators $(I-\tau w_l B_0)^{-1}$ and $(I-\tau w_l B_1)^{-1}$, a difficulty we will avoid by using (\ref{lema43}).

Concerning the stability, since $S_G$ contains components which are semigroups of translations, we can face the worst case \cite{BT} and it may happen that
$$
\| r(\tau G )^n \| \ge c \sqrt{n}, \qquad n \ge 0,
$$
for some $c >0$.  However, we will consider only perturbations of the first component and, to this purpose, what matters is the stability of $r(\tau A_0)$.  In general \cite{BT}, we have
$$
\| r(\tau A)^n \| \le CM n^{\alpha} \qquad n \ge 0, \tau \ge 0,
$$
where $C >0$ and $0 \le \alpha \le 1/2$ are constants that depend exclusively on $r(z)$. It is known that $\alpha = 0$ in case $r(z)$ is a Pad\'e approximation to the exponential. Moreover, in the most interesting instances, either when $A_0$ is a dissipative operator on a Hilbert space or when $S_{A_0}$ is a holomorphic semigroup, it turns out that
\begin{equation}
\label{Cs}
C_s(r,A_0):= \sup_{\tau > 0} \| r(\tau A_0)^n \| < +\infty
\end{equation}
Henceforth, by  simplicity, we will assume that $C_s(r,A_0) < +\infty$ and set $C_s=C_s(r,A_0)$. The results in the paper remain true   just by putting $C_s(n,r,A_0)$ instead of $C_s$, meaning that there are cases for which we must introduce an extra factor $n^\alpha$ in the estimates.

We are mainly  interested in consistency.  In the rest of the section we will assume that $u \in {\cal C}_{ub}^{p+1}([0,\infty),X)$, $f \in {\cal C}_{ub}^{p+1}([0,\infty),X)$ and $g \in {\cal C}_{ub}^{p+2}([0,\infty),Y)$.

Theorem~3 in \cite{BT}  shows that for $U_0 \in D(G^{p+1})$,
$$
\|S_G(t_{n+1})U_0-r(\tau G)S_G(t_n)U_0 \| \le C_{el} M \tau^{p+1}(1+ 2t_n)\| G^{p+1}U_0\|.
$$
We conclude that for smooth solutions (recall  (\ref{defseminorm}) and (\ref{seminorm})), we have
\begin{equation}
\label{cotahomlocal}
\| S_G(t_{n+1})U_0 - r(\tau G)S_G(t_n)U_0  \| \le C_{el}M \tau^{p+1}(1+ 2t_n) \, ||| [u,f,g]^T |||_{p+1,t_n}.
\end{equation}
The  first component $\rho_n \in X$ of the local error $S_G(t_{n+1})U_0-r(\tau G)S_G(t_n)U_0$, $n \ge 0$,
satisfies
\begin{equation}
\label{deflocal}
u(t_{n+1}) = r(\tau A_0) u(t_n) + \tau E(\tau) T_0(t_n)f- F(\tau) T_1(t_n) g+\rho_n,
\end{equation}
and (\ref{cotahomlocal}) implies that
\begin{equation}
\label{localrat}
\| \rho_n\| \le C_{el} M  \tau^{p+1}(1+ 2t_n) \, ||| [u,f,g]^T |||_{p+1,t_n}, \qquad n \ge 0.
\end{equation}

\begin{remark}
\label{remarkrmpv}
Variable step-sizes $\tau_n >0$, $n \ge 1$, can be considered for rational methods. In our context, it would be necessary to account for the stability constant $ C_s=\sup_{n \ge 1} \| \prod_{j=0}^n r(\tau \tau_j) \|$.
It is known that $C_s < +\infty$ when $S_{A_0}$ is a holomorphic semigroup and $r(z)$ is strongly stable. Moreover, in this situation, A($\theta$)-acceptable rational mappings can be considered for suitable angles $\theta$ related to the angle of the sector where  $S_{A_0}$ is holomorphic. For general semigroups, the techniques in \cite{BT} can show that $C_s < +\infty$ if $a \le \tau_n/\tau_m \le b$, for $n, \,m \ge 0$, and some $0 <a <b$.
\end{remark}

\section{The suggested method}
\label{SM}
As we mentioned in Section~\ref{SST}, the practical difficulty of the rational method (\ref{ratmet}) is evaluating the different expressions $L_0(I-\tau w_lB_0)^{-i}$ and  $L_1 \tau B_1(I-\tau w_lB_1)^{-i}$, $ 1 \le l \le k$,
$1 \le i \le m_l$, ocurring in (\ref{E}) and (\ref{F}), and we  have already suggested a way to overcome it (\ref{lema43}).   The difficulty is also present in the evaluations of $r(\tau B_0)$ and $r(\tau B_1)$, but, as we will show soon, these operators can be substituted by  $T_0(\tau)$ and $T_1(\tau)$.

Given $\tau >0$, we select two auxiliar  sequences $\{ \b c_n \}_{n=0}^{+\infty}$ and $\{ \b d_n \}_{n=0}^{+\infty}$, in $ \R^p_d$ and $\b d_n \in  \R^{p+1}_d$, resp.,
such that $t_n + \tau \b c_n \ge 0 $ and $t_n + \b d_n \ge 0$, $n \ge 0$.  Thus, in principle, we will have as many versions of the method we suggest as many possible choices of the auxiliar sequences.

Let us fix $1 \le l \le k$ and $1 \le i \le m_l$. For each $n \ge 0$, we solve the Vandermonde system (\ref{Vander}) corresponding to the nodes $\b c_n$, resp.   $\b d_n$, and the rational mappings $(1-w_lz)^{-i}$ , resp.  $z(1-w_l z)^{-i}$. This yields vectors $\b \gamma_{n,l,i} \in \C^p$ and $\b \eta_{n,l,i} \in \C^{p+1}$
such that, by (\ref{lema43}), satisfy

\begin{equation}
\label{cotaaux1}
\| L_0(I-\tau w_lB_0)^{-i} f(t_n + \cdot)  - \b \gamma_{n,l,i}^T \cdot  f(t_n+\tau \b c_n) \| \le \kappa_n \tau^{p} \| f^{(p)} \|_\infty
\end{equation}
and
\begin{equation}
\label{cotaaux2}
\| L_1 \tau B_1 (I-\tau w_lB_1)^{-i} g(t_n + \cdot)  - \b \eta_{n,l,i}^T \cdot  g(t_n+\tau \b d_n) \| \le \kappa_n \tau^{p+1} \| g^{(p+1)} \|_\infty,
\end{equation}
for some $\kappa_n =\kappa(\b c_n, \b d_n)$.

The basic idea is to introduce the operators $E_n(\tau) : {\cal C}_{ub}([0,+\infty),X) \to X$ and $F_n(\tau) : {\cal C}_{ub}([0,+\infty),Y) \to X$ defined by
\begin{equation}
\label{En}
E_n(\tau)f=\sum_{l=1}^k \sum_{j=1}^{m_l} r_{lj} w_l \sum_{i=1}^j (I- \tau w_l A_0)^{-j+i-1}\b \gamma_{n,l,i}^T \cdot  f(t_n+\tau \b c_n),
\end{equation}
\begin{equation}
\label{Fn}
F_n(\tau)g=\sum_{l=1}^k \sum_{j=1}^{m_l} r_{lj} w_l \sum_{i=1}^j (I- \tau w_l A_0)^{-j+i-1} K\b \eta_{n,l,i}^T \cdot  g(t_n+\tau \b d_n).
\end{equation}
It is clear, by Hille--Phillips Theorem and (\ref{cotaaux1}) and (\ref{cotaaux2}), that there exists $C_{aux} >0$, depending only on $r(z)$, such that
\begin{equation}
\label{Delta1}
\| \tau E(\tau)f - \tau E_n(\tau)f \| \le \kappa_n C_{aux} M \tau^{p+1} \| f^{(p)} \|_\infty, \quad f \in {\cal C}_{ub}^{p}([0,+\infty),X),
\end{equation}
and
\begin{equation}
\label{Delta2}
\| F(\tau)g - F_n(\tau)g \| \le  \kappa_n C_{aux} M \| K \| \tau^{p+1} \| g^{(p+1)} \|_\infty, \quad g  \in {\cal C}_{ub}^{p+1}([0,+\infty),X),
\end{equation}

These facts suggest to modify the rational method (\ref{methodrat}) and, instead of (\ref{E}) and (\ref{F}), use the operators $E_n(\tau)$ and $F_n(\tau)$. Moreover, we also suggest to use directly $T_0(t_n) f$ and $KT_1(t_n)$ in (\ref{methodrat}).

Thus, let $u \in {\cal C}([0,+\infty),X)$ be the solution of (\ref{equdot}) with  data $u_0 \in X$, $f \in {\cal C}_{ub}([0,+\infty),X)$, $g \in {\cal C}_{ub}^1([0,+\infty),Y)$. For $\tau >0$, we propose the recurrence
\begin{equation}
\label{method}
\bar u_{n+1} = r(\tau A_0) \bar u_n + \tau E_n(\tau) f(t_n + \cdot) - F_n(\tau)g(t_n + \cdot), \qquad n \ge 0,
\end{equation}
with $\bar u_0 = u_0$, as the numerical procedure to time integrate (\ref{equdot}). This  procedure avoids the practical difficulty of solving systems in (\ref{ratmet}), avoids the use of $r(\tau B_0)$ and $r(\tau B_1)$, and it only requires evaluations of $f$ and $g$. In  the sequel, the suggested procedure will be called a rational like (RL) method. The specification of the version requires a full notation  $RL(r(z), \tau,\{ \b c_n \}_{n=0}^{+\infty}, \{ \b d_n \}_{n=0}^{+\infty})$. However, once the parameters are fixed,   we will simply talk about the RL method.

The analysis of the convergence of (\ref{method}) is rather simple and it is carried out by the classical approach of stability and consistency. The proof in the present manuscript is somehow simpler that the related one in \cite{ArP}. The reason is that the presentation in \cite{ArP} was intended to cope, in a simultaneous way, with analytic semigroups and data. Let us recall (\ref{defseminorm}) and (\ref{Cs}).

\begin{lemma}
\label{stability}
Let $\delta_n \in X$, $n \ge 0$, be a sequence of perturbations and let $\bar u_n^* \in X$, $n \ge 0$, be the solution of the perturbed recurrence
$$
\bar u_{n+1}^* = r(\tau A_0) \bar u_n^* +
\tau E_n(\tau) f(t_n + \cdot) - F_n(\tau)g(t_n + \cdot)+\delta_n, \qquad n \ge 0,
$$
with initial value $\bar u_0^* = \bar u_0 +\delta_0$. Then,
$$
\| \bar  u_n^* -\bar   u_n \| \le C_s \sum_{j=0}^{n-1} \| \delta_j \|, \qquad n \ge 0.
$$
\end{lemma}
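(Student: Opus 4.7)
The proof is a standard discrete variation-of-constants (discrete Duhamel) argument. The essential observation is that the forcing terms $\tau E_n(\tau) f(t_n+\cdot)$ and $F_n(\tau) g(t_n+\cdot)$ appear identically in both the perturbed and unperturbed recurrences and therefore cancel upon subtraction. Setting $e_n := \bar u_n^* - \bar u_n$, this cancellation leaves the purely linear error recurrence
\begin{equation*}
e_{n+1} = r(\tau A_0)\, e_n + \delta_n, \qquad n \ge 0,
\end{equation*}
with initial datum $e_0 = \delta_0$. This reduction is, in fact, the whole content of the lemma: because the scheme is affine in the state and the data $f$, $g$ enter the two recurrences identically, the propagation of the perturbations decouples completely from the nonhomogeneous terms and from the auxiliary operators $E_n(\tau)$, $F_n(\tau)$.

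Having isolated the error recurrence, I would iterate it to obtain, for $n \ge 1$,
\begin{equation*}
e_n = r(\tau A_0)^n\, e_0 + \sum_{j=0}^{n-1} r(\tau A_0)^{n-1-j}\, \delta_j,
\end{equation*}
which is a one-line induction on $n$. Taking norms and applying the triangle inequality reduces matters to controlling the operator-norm powers of the stability function $r(\tau A_0)$, and this is where the one nonelementary ingredient enters: the uniform bound $\| r(\tau A_0)^k \| \le C_s$ from the standing hypothesis (\ref{Cs}).

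Applying that bound termwise yields
\begin{equation*}
\|e_n\| \le C_s \|e_0\| + C_s \sum_{j=0}^{n-1} \|\delta_j\|,
\end{equation*}
and merging the initial contribution $\|e_0\|=\|\delta_0\|$ into the $j=0$ summand gives the asserted estimate. I do not anticipate any genuine obstacle here: once the cancellation of the inhomogeneous terms is observed, the rest is a textbook telescoping argument whose sole nontrivial input is the power-boundedness of the rational stability function, which is already in hand from the discussion surrounding (\ref{Cs}). The simplification over the analogous stability result in \cite{ArP} is exactly as the text flags, since we do not need to carry holomorphic-data technology through the estimate.
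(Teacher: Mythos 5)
Your argument is exactly the paper's: subtracting the two recurrences cancels the inhomogeneous terms and leaves $e_{n+1}=r(\tau A_0)e_n+\delta_n$, after which the paper simply declares the rest trivial, invoking the power-boundedness constant $C_s$ just as you do via the discrete variation-of-constants formula. The only blemish is your final ``merge'': since the statement makes $\delta_0$ appear both in the initial value and in the $n=0$ recurrence step, your explicit formula actually yields $\|e_n\|\le C_s\bigl(\|\delta_0\|+\sum_{j=0}^{n-1}\|\delta_j\|\bigr)$, which cannot be absorbed into $C_s\sum_{j=0}^{n-1}\|\delta_j\|$ without a factor $2$ on the $\delta_0$ term --- but this is an indexing quirk of the lemma itself (note it also formally asserts $\|e_0\|\le 0$ at $n=0$) that the paper's one-line proof equally glosses over, and it is harmless in the intended application, where the initial perturbation vanishes.
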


\begin{proof} Since the difference  $\bar u_n^* - \bar u_n$, $n \ge 0$, satisfies
$$
\bar u_{n+1}^* - \bar u_{n+1} = r(\tau A_0) (\bar u_n^* - \bar u_n)+\delta_n, \qquad n\ge 0,
$$
the proof is trivial (note that $C_s \ge 1$). \end{proof}

The local error of the RL method, at time level $n \ge 0$, is now the residual $\bar \rho_n \in X$ satisfying
\begin{equation}
\label{deflocalsm}
u(t_{n+1})= r(\tau A_0) u(t_n) + \tau E_n(\tau) T_0(t_n) f - F_n(\tau) T_1(t_n) g +\bar \rho_n.
\end{equation}
The consistency is provided by the next lemma.

\begin{lemma}
\label{consistence}
Let  us assume that $u \in {\cal C}^{p+1}([0,+\infty),X)$, $f \in {\cal C}^{p+1}([0,+\infty),X)$ and $g \in {\cal C}^{p+2}([0,+\infty),Y)$. Then, the corresponding local errors $\bar \rho_n$, $n \ge 0$, of the RL  method (\ref{method}) satisfy
$$
\| \bar \rho_n \| \le   C_{c,n} \tau^{p +1} (1+ 2t_n) ||| [u,f,g]^T |||_{p+1,t_n},
$$
where $C_{c,n}=M (C_{el} + \kappa_n  C_{aux}(1+\| K \|))$.
\end{lemma}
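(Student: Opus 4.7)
The plan is to compare the local error $\bar\rho_n$ of the RL method against the local error $\rho_n$ of the underlying rational method (\ref{ratmet}), whose bound (\ref{localrat}) has already been established via the semigroup framework. Subtracting the defining identity (\ref{deflocal}) from (\ref{deflocalsm}) gives
$$
\bar\rho_n = \rho_n + \tau\bigl[E(\tau)-E_n(\tau)\bigr]T_0(t_n)f - \bigl[F(\tau)-F_n(\tau)\bigr]T_1(t_n)g,
$$
so by the triangle inequality the problem reduces to bounding three independent pieces.

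The first piece, $\|\rho_n\|$, is handled verbatim by (\ref{localrat}), which gives the $C_{el}M\,\tau^{p+1}(1+2t_n)|||[u,f,g]^T|||_{p+1,t_n}$ contribution. For the remaining two pieces, I would apply estimates (\ref{Delta1}) and (\ref{Delta2}), but to the \emph{shifted} data $T_0(t_n)f$ and $T_1(t_n)g$ rather than to $f$ and $g$ themselves; this is precisely the setting in which the nodes $t_n+\tau\mathbf{c}_n$ and $t_n+\tau\mathbf{d}_n$ underlying $E_n(\tau),F_n(\tau)$ were constructed (see (\ref{cotaaux1})--(\ref{cotaaux2})). The resulting bounds produce factors $\|(T_0(t_n)f)^{(p)}\|_\infty$ and $\|(T_1(t_n)g)^{(p+1)}\|_\infty$, which are controlled by the corresponding components of the seminorm defined in (\ref{defseminorm}).

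Finally, I would package everything together: since $1+2t_n\ge 1$, the two approximation contributions can be absorbed into a common $(1+2t_n)$ prefactor without loss, giving
$$
\|\bar\rho_n\|\le M\bigl(C_{el}+\kappa_n C_{aux}(1+\|K\|)\bigr)\,\tau^{p+1}(1+2t_n)\,|||[u,f,g]^T|||_{p+1,t_n},
$$
which is exactly the claimed bound with $C_{c,n}=M(C_{el}+\kappa_n C_{aux}(1+\|K\|))$.

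The main obstacle is purely bookkeeping: reconciling the sup-over-$[0,\infty)$ norms that arise naturally when (\ref{Delta1})--(\ref{Delta2}) are invoked on the shifted functions $T_0(t_n)f$ and $T_1(t_n)g$ with the sup-over-$[0,t_n]$ flavor of $|||\cdot|||_{p+1,t_n}$. Under the blanket smoothness assumptions $f\in{\cal C}^{p+1}$, $g\in{\cal C}^{p+2}$ (which guarantee the finiteness of all the needed norms via the ${\cal C}^m_{ub}$ framework of Section \ref{SST}), this amounts to replacing any sup over future times by the seminorm used in (\ref{seminorm}); no new analytic input is required, in line with the remark in the paper that the proof is simpler than its counterpart in \cite{ArP}.
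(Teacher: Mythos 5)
Your proposal is correct and follows essentially the same route as the paper: the identical decomposition $\bar\rho_n = \rho_n + \tau\bigl(E(\tau)-E_n(\tau)\bigr)T_0(t_n)f - \bigl(F(\tau)-F_n(\tau)\bigr)T_1(t_n)g$ obtained by subtracting (\ref{deflocalsm}) from (\ref{deflocal}), followed by (\ref{localrat}), (\ref{Delta1}) and (\ref{Delta2}). The extra bookkeeping you describe about the shifted data and the sup norms is a fair observation but does not change the argument, which the paper carries out in exactly this way.
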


\begin{proof}
We will also use the first component of the local error  $\rho_n$, $n \ge 0$, given in (\ref{deflocal}), of the rational discretization (\ref{methodrat}). Subtracting (\ref{deflocalsm}) from (\ref{deflocal}), results in
$$
\bar \rho_n = \rho_n  +\tau (E(\tau) -  E_n(\tau))T_0(t_n)f - (F(\tau) - F_n(\tau))T_1(t_n)g, \quad n \ge 0,
$$
and the proof concludes by using (\ref{localrat}), (\ref{Delta1}) and  (\ref{Delta2}). \end{proof}

The convergence is now a plain consequence of the lemmas. We will assume, by simplicity, $\bar \kappa= \sup_{n \ge 0} \kappa_n < +\infty$. This is obvious when the sequences $\{ \b c_n \}_{n=0}^{\infty}$ and
$\{ \b d_n \}_{n=0}^{\infty}$  are eventually constant. Set  $C_c =M (C_{el} + \kappa  C_{aux}(1+\| K \|))$. General choices of the auxiliar sequences can equally be considered just by using $\bar \kappa_0= \kappa_0$ and $\bar \kappa_n= \max_{0 \le j \le n} \kappa_n$, for $n \ge 1$.

\begin{theorem}
\label{convergence}
Assume that the solution $u$ of (\ref{equdot}) belongs to $ {\cal C}^{(p+1)}[0,+\infty),X)$, that $f \in {\cal C}^{(p+1)}([0,+\infty),X)$ and that $g \in {\cal C}^{(p+2)}([0,+\infty),Y)$. Then, the approximations $\bar u_n$, $ n\ge 1$, generated by the given version of the RL method (\ref{method}) satisfy
$$
\|  u(t_n)- \bar u_n\| \le  C_c \tau^p t_n (1+ 2t_n) ||| [u,f,g]^T |||_{p+1,t_n}, \qquad n \ge 1.
$$
\end{theorem}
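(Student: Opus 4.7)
The plan is the classical stability + consistency pattern. First I would set $\bar u_n^*=u(t_n)$ in Lemma~\ref{stability}. Since $\bar u_0=u_0=u(0)$, the initial perturbation vanishes, and (\ref{deflocalsm}) exhibits $u(t_{n+1})$ as the result of applying one step of the RL recursion to $u(t_n)$ plus the local residual $\bar\rho_n$. Thus Lemma~\ref{stability}, applied with $\delta_0=0$ and $\delta_j=\bar\rho_j$ for $j\ge 0$, yields the telescoping bound
\[
\|u(t_n)-\bar u_n\|\le C_s\sum_{j=0}^{n-1}\|\bar\rho_j\|.
\]

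Next I would insert the consistency estimate from Lemma~\ref{consistence}. Because $\bar\kappa=\sup_{n\ge 0}\kappa_n<+\infty$, the $n$-dependent constants satisfy $C_{c,j}\le M(C_{el}+\bar\kappa C_{aux}(1+\|K\|))=C_c$ for every $j$. Using also monotonicity in $t$ of both the factor $(1+2t_j)$ and the seminorm $|||[u,f,g]^T|||_{p+1,t_j}$, I can bound each local error uniformly by
\[
\|\bar\rho_j\|\le C_c\,\tau^{p+1}(1+2t_n)\,|||[u,f,g]^T|||_{p+1,t_n},\qquad 0\le j\le n-1.
\]

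Finally, summing these $n$ contributions and using $n\tau=t_n$ gives
\[
\|u(t_n)-\bar u_n\|\le C_s\,C_c\,n\,\tau^{p+1}(1+2t_n)\,|||[u,f,g]^T|||_{p+1,t_n}=C_s\,C_c\,\tau^{p}\,t_n(1+2t_n)\,|||[u,f,g]^T|||_{p+1,t_n},
\]
which is the claimed bound (the factor $C_s$ being absorbed into the constant denoted $C_c$ in the statement, consistent with the convention $C_s(r,A_0)<+\infty$ adopted in (\ref{Cs})).

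There is no real obstacle here: both building blocks are already in hand, so the only things to be careful about are (i) verifying that (\ref{deflocalsm}) matches exactly the form of the perturbed recursion in Lemma~\ref{stability} so that $\delta_j=\bar\rho_j$ and $\delta_0=0$ can be chosen simultaneously, and (ii) using the monotonicity of the seminorm $|||\cdot|||_{p+1,t}$ in $t$ to replace every $t_j$ by $t_n$ before summing. The rest is arithmetic.
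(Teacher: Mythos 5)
Your proposal is correct and follows essentially the same route as the paper: apply Lemma~\ref{stability} with $u(t_n)$ as the perturbed sequence and the residuals $\bar\rho_j$ from (\ref{deflocalsm}) as the perturbations, then invoke Lemma~\ref{consistence} together with monotonicity in $t$ of the factor $(1+2t)$ and of $|||\cdot|||_{p+1,t}$. The only cosmetic difference is that the paper bounds $\sum_{j=0}^{n-1}\|\bar\rho_j\|$ by $n\max_{0\le j\le n-1}\|\bar\rho_j\|$ whereas you keep the sum, and both arguments absorb the stability constant $C_s$ into the final constant in the same way.
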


\begin{proof}
Since $u(t_n)$, $n \ge 0$,  fits into the recurrence
$$
u(t_{n+1}) = r(\tau A_0)u(t_n) +\tau E_n(\tau) f (t_n+\cdot) - F_n(\tau)g(t_n+\cdot) + \bar \rho_n,
$$
by Lemma~\ref{stability} we have
$$
\|  u(t_n)- \bar u_n\| \le C_s n \max_{0 \le j \le n-1} \| \bar \rho_j \|, \qquad n \ge 1,
$$
and the proof concludes by using Lemma~\ref{consistence}. \end{proof}

Let us point out that, by the well known Lax--Richtmyer Theorem and since the stability of (\ref{method}) depends only on that of $r(\tau A_0)$, it turns out that the RL method (\ref{method}) converges,  without any order of convergence,  for  data in $X \times L^1_{loc}([0,+\infty),X) \times BV_{loc}([0,+\infty),Y)$.

Concerning the IBVP (\ref{problem}), we just  propose
\begin{equation}
\label{ratmetibvp}
\bar w_n = K g(t_n) + \bar u_n, \qquad n \ge 0,
\end{equation}
as the numerical approximation to $w(t_n)$.

Since $u = w-K g$, it turns out that $u \in {\cal C}_{ub}^{p+1}([0,+\infty),X)$ and that
\begin{equation}
\label{triple}
||| [u,f,g]^T |||_{p+1,t_n} \le  ||| [w,f,g]^T |||_{p+1,t_n} + \| K \| \| g \|_{p+1,t_n},
\end{equation}
and then Theorem~\ref{convergence} shows that, for $n \ge 0$,
\begin{equation}
\label{errortw}
\| \bar w_n - w(t_n) \| \le  C_c\tau^p t_n (1+ 2t_n) \left\{ ||| [w,f,g]^T |||_{p+1,t_n} + \| K \| \| g \|_{p+1,t_n} \right\},
\end{equation}
that bounds the error in terms of the smoothness of the solution $w$  and that of  $f$ and  $g$.

\begin{remark}
\label{naturalchoice}
A natural choice for $\b c_n$, $n \ge 0$,  is one fulfilling that the last $p-1$ components of
$ t_n+\tau_n \b c_n $ are the first $p-1$ components of  $t_{n+1} + \tau_{n+1} \b c_{n+1}$ at least for $n \ge n_0$, for a certain $n_0 \le p$. The same argument applies for $\b d_n$ and $p$ of its $p+1$ components.

For instance, with fixed step sizes, we can choose $\b c_0 =[0, 1, \ldots,p-1]$,
$\b c_n = -n + \b c_0$, for $1 \le n \le n_0$ and $\b c_n =  \b c_{n_0}$, for $n > n_0$. For  $\b d_n$, we can proceed
in the same way, just by changing $p-1$ by $p$. In this way, instead of the $p$ evaluations of $f$ and the $p+1$ evaluations of $Kg$ per step, needed for arbitrary choices, we pass to perform just one evaluation of $f$ and $g$, for $n \ge n_0$. If $n_0$ is roughly $p$ or $p+1$, the first evaluations are used for the first  steps, and globally we need one evaluation of $f$ and $g$. Moreover, recalling Remark~\ref{kappa}, a good choice is to deal with sequences  such that, for $n \ge n_0$, $t_n$ is the mean point of both the nodes $t_n + \tau  \b c_n$ and $t_n + \tau \b d_n$. This requires, to use half-integer numbers and different evaluation points for $f$ and for  $Kg$, and both are harmless features.

If we do not insist on using these natural choices, we will pay the price of performing more  evaluations, but we are free to choose the auxiliar nodes and improve the value of $\kappa$.
\end{remark}

\begin{remark}
Suppose that  $r(z)$ is the stability function of an  $s$-stage Runge--Kutta method and that $g'$ is really available or computable with high precision. Besides the central point of the order reduction phenomenon, overcome by the RL method, both the  Runge--Kutta method  and method (\ref{method}), applied to (\ref{equdot}), require solving $s$ linear systems with the operator $A_0$ per step. But while the  Runge Kutta method needs $s$ evaluations of $f$ and $Kg'$  per step,  with an adequate choice  the auxiliar nodes, as explained in Remark~{\ref{naturalchoice}},  the RL method (\ref{method}) requires just one new evaluation of $f$ and $Kg$ per step (against the $p$ and $p+1$ required with general choices of the nodes). Notice that the vector of abcissa $\b c_{RK}$ of the Runge--Kutta method is neither used in (\ref{method}) nor related to $\b c_n's$ or $\b d_n's$.
\end{remark}

\begin{remark}
The approach we suggest can be easily extended to deal with higher order derivatives in the source terms, that  is, to consider time discretizations of problems with the format
$$
u'(t) = A_0 u(t) + \sum_{j=0}^m K_j f_j^{(j)}(t), \qquad t \ge 0,
$$
where $K_j : Y_j \to X$ are linear, bounded operators, acting on different Banach spaces $Y_j$, $0 \le j \le m$, and the nonhomogeneous terms are mappings $f_j \in {\cal C}_{ub}^{j}([0,+\infty),Y_j)$, $0 \le j \le m$. In particular, $m=2$ would be of interest in the context of wave equations.
\end{remark}

\begin{remark}
\label{variabless}
Given a sequence of step-sizes  $\tau_n$, $n \ge 0$, fulfilling the requirements in Remark~\ref{remarkrmpv}, in particular the constant ones,  the suggested method (\ref{method}) becomes
$$
\bar u_{n+1} = r(\tau_n A_0) \bar u_n + \tau_n E_n(\tau_n) f(t_n + \cdot) - F_n(\tau_n)g(t_n + \cdot), \qquad n \ge 0.
$$
and in Theorem~\ref{convergence} we must change $t_n \tau^p$ by $\sum_{j=0}^{n-1} \tau_j^{p+1}$. The main interest of variable step sizes is to adapt the step size so as to take advantage of the local regularity, something that, in view of Remark~\ref{remarkrmpv},  looks possible in some cases.
\end{remark}

\section{Full discretization}
\label{SFD}
In this section we consider  the full discretization  of (\ref{problem}). The framework we introduce does not assume any specific nature of the space discretization, so that we use the term space discretization in general, but in the comments we refer to  standard finite differences, finite elements methods or spectral methods. We adopt the  point of view of the method of lines,

Let $ 0 < h \le h_0$ denote the parameter governing the space discretization. All the operators that we consider are going to be linear. Associated with each $h$-value,  we introduce two Banach spaces  $X_h$ and $Y_h$ of finite dimension and two operators $P_h : X \to X_h$ and $Q_h : Y \to Y_h$.  Their norms, as well as the  norms of associated  operators are, by default, denoted by $\| \cdot \|_h$. The norms $\| \cdot \|_h$ reflect the ones of  $X_h$ and $Y_h$ and it  is overunderstood that, for $w \in X$,  $P_hw \in X_h$ contains enough information so as to provide an approximation of $w \in W$.  The same idea is behind $Q_hv$, for $v \in Y$.
For instance, in finite elements, $P_h$ and $Q_h$ are $L^2$ projectors (considered even for $X=L^q$, $q \ne 2$) and, in finite differences, they are  sampling operators (or concentrated averages, for $q < +\infty$).

\medskip
The connection to IBVP (\ref{problem}) is given by:

\begin{itemize}

\item[] (a) An onto  operator $\partial_h : X_h \to Y_h$.   We set $X_{h,0} = \mbox{Ker}\,(\partial_h)$.

\item[]  (b) Two   operators $A_h : X_h \to X_h$ and $A_{h,0}:  X_{h,0} \to X_{h,0}$ such that $A_h = A_{h,0}$ on $X_{h,0}$.  The semigroup generated by $A_{h,0}$ on $X_{h,0}$  is denoted by $S_{A_{h,0}}(t)$, $ t \ge 0$.

\item[] (c) An operator $K_h : Y_h \to X_h$.

\item[] (d) An operator $P_{h,0} : X \to X_{h,0}$ such that $\|P_h-P_{h,0}\|_{W_0 \to X_h} \le C' \|\partial_h  P_h\|_{W_0 \to Y_h}$ for some constant $C'$ and another operator $K_{h,0}:Y_h \to X_{h,0}$. If $X_h \subset X$ so that $P_{h,0}$ is also defined in $X_h$, a usual condition for finite elements, then one can substitute $K_{h,0}$ by $K_{h,0} Q_h g=P_{h,0}(K_h Q_h g)$ and it is not necessary to include a new operator.

\end{itemize}
As the notation suggests, $A_h$, $A_{h,0}$, $\partial_h P_h$ and $K_h Q_h$, try to fit with $P_h A$, $P_{h,0} A_0$, $Q_h\partial$, $P_hK $.

For finite elements, there is an obvious, natural connection between $P_h$ and $P_{h,0}$, but nothing is postulated about this point. The idea is that $P_{h,0} w$ contains specific information to reconstruct $w \in\mbox{\rm Ker} ( \partial)$.  In spectral methods, for boundary conditions other than the Dirichlet ones, it turns out that $\partial w =0$  does not imply that $\partial_h P_h w=0$, for $w \in X_h$, this is why it is important to introduce $P_{h,0}$.
\medskip

The consistency refers to a couple of Banach spaces $(W,\| \cdot \|_Z)$, with $ W \subset D(A) \subset X$ and $(Z, \| \cdot\|_Z)$, with $Z \subset Y$ that are continuously embedded in  $D(A)$ and $Y$  and satisfy (\ref{WZ}).
We set $ W_0 = \mbox{\rm Ker} \, (\delta) \cap W$, endowed with the norm $ \| \cdot \|_{W_0}$, that is the one induced by $\| \cdot \|_W$.

To reflect the fact that the norm of $W$ and $Z$ are finer than those of $X$ and $Y$, we must consider new, adequate norms $\| \cdot \|_{h,W}$ on $X_h$ and $\| \cdot \|_{h,Z}$  on $Y_h$. The spaces $(W_h, \| \cdot \|_{h,Z} )$ and  $(Z_h, \| \cdot \|_{Z_h})$ are denoted by $W_h$ and $Z_h$,  i.e., $W_h = X_h$ and $Z_h =Y_h$,  but endowed with different norms.

Set
$$L_h = \max\{\| P_h \|_h, \| P_{h,0}\|_h, \| Q_h \|_h, \| \partial_h \|_{W_h \to Y_h}, \| Q_h \|_{Z \to Z_h} \},$$
and
$$
M_h = \sup_{ t \ge 0}\| S_{A_{h,0}}(t) \|.
$$
The ideal stability hypotheses are

\begin{itemize}
\item[] (S.1) $L := \sup_{0 < h \le h_0} L_h <+\infty$.
\item[] (S.2) $M_s: =  \sup_{0 < h \le h_0} M_h <+\infty$.
\end{itemize}

However, frequently $L_h$ and, particularly, $M_h$ exhibit a weak growth, for instance $M_h = O(|\ln h|)$, as $ h \to 0+$  (this is why we have singled out $M_h$). By simplicity, we will assume (S.1) and (S.2), keeping in mind that the final convergence estimate is valid by using $L_h$ and $M_h$.

Consistency is  expressed in terms of an infinitesimum $\epsilon: (0,h_0] \to (0,+\infty)$  that measures the quality of the convergence. Typically $\epsilon (h) = Const.\, h^m$, for some order $m > 0$. Tracing the value of all the involved constants, by following their proofs,  is an impossible task that leads to pessimistic and huge values for a general situation. In the absence of particular properties, we just try to catch the order and later, if possible, to use other techniques to get the leading constant for a particular problem. With this idea in mind, we will simplify the statements and proofs by expressing the different infinitesima in the form $O(\epsilon(h))$, overunderstanding that the hidden constants are bounded by a common value, uniformly for $0 < h \le h_0$.

Consistency  assumes that for $0 < h \le h_0$:

\begin{itemize}
\item[] (C.1) $\| A_{h,0}P_{h,0}- P_{h,0} A_0 \|_{W_0 \to X_h}  \le O(\epsilon(h))$.
\item[] (C.2) $\| \partial_h P_h \|_{W_0 \to  Y_h} \le O(\epsilon(h))$.
\item[] (C.3) $\| P_h K(0)  - K_h  Q_h \|_{Z \to X_h}   \le O(\epsilon(h) )$.
\item[] (C.4) $\| P_{h,0} K(0)  - K_{h,0}  Q_h] \|_{Z \to X_{h,0}}   \le O(\epsilon(h) )$.
\end{itemize}

It is important to note that, while (C.1) is standard for IVPs,  when dealing with IBVPs,  it is natural to require that both the  boundary condition and problem (\ref{eig}) can be discretized accurately for data in the consistency class, that is (C.3) and (C.4), and that the solution depends continuously on the data, that is the reason of (C.2).

Let $w$ be the solution of the IBVP (\ref{problem}). Henceforth we suppose that $w \in {\cal C}_{ub}^1([0,+\infty),W)$,  $f \in {\cal C}_{ub}([0,+\infty),X)$ and $g  \in {\cal C}_{ub}^1([0,+\infty),Z)$ and set $w=u+K(0) g$, where $u$ solves (\ref{equdotu}). We have already remarked  that $u$, due to the natural compatility condition,  not only takes values in $W_0$
but is also the solution of (\ref{equdotu}), that turns out to be an IVP in $W_0$. We remark here that, when using finite differences, this space $W$ consists of continuous functions, for which the nodal projection $P_{h,0}$ is completely defined and there is no need to recur to concentrated averages.

Given $ t \ge 0$, while for   $K(0) g(t)$ the goal is to approximate $P_h K(0) g(t)$, that brings the information to reconstruct $K(0) g(t)$, for $u(t) \in W_0$ it is rather to approximate $P_{h,0} u(t)$, that codifies the information to reconstruct $u(t)$. With these goals in mind, we just propose $K_h  Q_h g'(t)$ as the space discretization of $K(0) g'(t)$, and  $u_h(t)$, where $u_h$ is the solution  of the IVP
\begin{equation}
\label{ivpuh}
\left\{
\begin{array}{lcl}
u_h'(t)  &=& A_{h,0} u_h(t) + P_{h,0} f(t) - K_{h,0} Q_h g'(t), \quad t \ge 0, \\
u_h(0) &= &  P_{h,0}w_0 - K_{h,0} Q_h g(0), \\
\partial_h u_h(t) &=& 0, \quad t \ge 0,
\end{array} \right.
\end{equation}
as the space discretization of $u$.  The proposed space discretization of $w$ is simply $w_h = K_h Q_h g + u_h$, due to the consistency hypothesis (C.3).

We stress that (\ref{ivpuh}) is an IVP in $X_{h,0}$ indeed (the boundary condition is  thus redundant). In contrast, notice  that  the related problem with source term $P_h f(t) - K_h Q_h g'(t)$ results in an IVP in $X_h$, not in $X_{h,0}$, no matter that its solution takes values in $X_{h,0}$. Actually, the error analysis of  such an IVP looks impossible, unless the extra hypothesis
$\| P_h - P_{h,0} \|_{W_0 \to X_h}=O(\epsilon(h))$, which comes from (d) and (C.2), is introduced in the formalism.

We are now in a position to prove the next result. In consonance with (\ref{defnorm}), we set
$$
 \| w \|_{1,t,W} =  \max_{0 \le j \le 1} \max_{0 \le s \le t} \| w^{(j)} (s) \|_W, \quad t \ge 0.
$$

\begin{theorem}
\label{convergencesd}
Assume that the solution $w$ of the IBVP (\ref{problem}) belongs to  ${\cal C}_{ub}^1([0,+\infty),W)$,  $f \in {\cal C}_{ub}([0,+\infty),X)$ and $g  \in {\cal C}_{ub}^1([0,+\infty),Z)$. Then, the error of the full semidiscrete approximation $e_h=P_{h,0}u-u_h$ can be estimated by
$$
\| e_h  \|_h \le t M_s  O( \epsilon(h) ) \| w \|_{1,t,W},\qquad  t \ge 0, \quad 0 < h \le h_0. \,
$$
\end{theorem}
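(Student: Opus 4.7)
The plan is to derive a linear inhomogeneous ODE for $e_h = P_{h,0}u - u_h$ inside $X_{h,0}$ and then apply variation of constants. Applying $P_{h,0}$ to the equation (\ref{equdotu}) for $u$, inserting $\pm A_{h,0}P_{h,0}u$, and subtracting the equation (\ref{ivpuh}) for $u_h$ gives
\[
e_h'(t) = A_{h,0} e_h(t) + \tau_h(t),
\]
with truncation error
\[
\tau_h(t) = (P_{h,0} A_0 - A_{h,0} P_{h,0}) u(t) - (P_{h,0} K(0) - K_{h,0} Q_h) g'(t),
\]
and initial residual $e_h(0) = -(P_{h,0} K(0) - K_{h,0} Q_h) g(0)$. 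Since $P_{h,0}$ and $K_{h,0}$ take values in $X_{h,0}$, the whole trajectory of $e_h$ stays in that subspace, so the semigroup $S_{A_{h,0}}$ is legitimately applicable.

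Next I would bound $\tau_h$ and $e_h(0)$ via the consistency hypotheses. Because $u(t) \in W_0$ (the natural compatibility condition, as recalled in Section \ref{SIBVP}, guarantees this) and $g(t), g'(t) \in Z$, estimates (C.1) and (C.4) yield
\[
\|\tau_h(t)\|_h \le O(\epsilon(h))\bigl(\|u(t)\|_{W_0} + \|g'(t)\|_Z\bigr), \qquad \|e_h(0)\|_h \le O(\epsilon(h))\|g(0)\|_Z.
\]
Then the variation-of-constants formula in $X_{h,0}$,
\[
e_h(t) = S_{A_{h,0}}(t) e_h(0) + \int_0^t S_{A_{h,0}}(t-s)\tau_h(s)\,ds,
\]
combined with (S.2), gives
\[
\|e_h(t)\|_h \le M_s\bigl(\|e_h(0)\|_h + t \sup_{0 \le s \le t} \|\tau_h(s)\|_h\bigr).
\]

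The last step is to translate the $W_0$ and $Z$ norms back into $\|w\|_{1,t,W}$. Inequality (\ref{cotaW0u}) yields $\|u(s)\|_{W_0} \le C \|w(s)\|_W$, and the continuity of $\partial : W \to Z$ (noted right after (\ref{WZ})) applied to $g^{(j)}(s) = \partial w^{(j)}(s)$ gives $\|g^{(j)}(s)\|_Z \le C \|w^{(j)}(s)\|_W$ for $j = 0, 1$. Inserting these bounds, absorbing all $h$-independent constants into the $O$-notation, and observing that the contribution $M_s\|e_h(0)\|_h$, which does not come with an $a\,priori$ factor $t$, can be subsumed into the integral term via $1 \le 1+t$ and then into the hidden constant of the stated form, produces the announced estimate. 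No technical difficulty appears beyond this bookkeeping of norms; the only point that requires attention is the treatment of the nonzero initial error $e_h(0)$, which is genuinely there because $P_{h,0}$ and $K_{h,0}Q_h$ only agree on $K(0)$ modulo $O(\epsilon(h))$.
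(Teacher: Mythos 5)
Your proof is correct and follows essentially the same route as the paper's: the same error equation for $e_h$ in $X_{h,0}$ with the same truncation error, the bounds via (C.1), (C.4) and (\ref{cotaW0u}), and the conclusion by variation of constants together with (S.2). The only (shared) imprecision is that the initial-error contribution $M_s\|e_h(0)\|_h$ does not literally carry a factor $t$, a point the paper's proof also glosses over; your remark on it is fair but the absorption ``via $1\le 1+t$'' does not strictly yield the stated $t$-prefactor for small $t$.
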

\begin{proof}
Plugging $P_{h,0}$ in (\ref{equdotu}) and making the difference with (\ref{ivpuh})  shows readily that the error $e_h = P_{h,0} u - u_h$ fits into the IVP
$$
\left\{
\begin{array}{lcl}
e_h'(t)  &=& A_{h,0} e_h(t) + \delta_h(t), \quad t \ge 0, \\
e_h(0) &= & (K_{h,0} Q_h-P_{h,0}K(0))g(0), \\
\partial_h e_h(t) &=& 0, \quad t \ge 0,
\end{array} \right.
$$
where the truncation error is
$$
\delta_h(t) =P_{h,0} A_0 u(t) - A_{h,0} P_{h,0} u(t)-(P_{h,0}K(0)g'(t)-K_{h,0}Q_h g'(t)), \qquad t \ge 0.
$$
Therefore (recall (\ref{defnorm})),  by (C.1), (C.4) and the fundamental estimate (\ref{cotaW0u}), for $t \ge 0$,
\begin{eqnarray}
\| \delta_h(t) \|_h \le O(\epsilon(h) ( \| u \|_{0,t,W} + \| g \|_{1,t,Z}) = O(\epsilon(h)) \| w \|_{1,t,W}.
\nonumber
\end{eqnarray}
whereas
$$
\|e_h(0)\|_h \le O(\epsilon(h)) \|g(0)\|_Z.$$
Now, by taking norms in the variation-of-constants formula,
$$
e_h(t) =  S_{A_{h,0}}(t) e_h(0)+\int_0^t S_{A_{h,0}}(t-s) \delta_h(s) \,\mbox{\rm d}s, \qquad t \ge 0,
$$
and using (S.2),
we readily conclude the proof.
\end{proof}

For $0 < h \le h_0$ and $t_n = n\tau$, $n \ge 0$, it is now natural to propose, as the full approximation to $w(t_n)$,   the sum
$$
\bar w_{h,n} = K_h Q_h g(t_n) + \bar u_{h,n},
$$
where $\bar u_{h,n}$ is the time approximation, at time level $n$,  to   the solution $u_h$ of the IVP
\begin{equation}
\label{equdoth}
\left\{
\begin{array}{lcl}
u'_h(t) &= &A_{h,0} u_h(t) + P_{h,0} f(t) - K_{h,0} Q_h g'(t), \qquad t \ge 0, \\
u_h(0) &=& u_{h,0}:=P_h w(0)- K_{h,0} Q_h g(0),
\end{array} \right.
\end{equation}
provided by some version of a RL method (\ref{method}),  based on $r(z)$. Since
\begin{eqnarray}
\| P_h w(t_n) - \bar w_{h,n} \|_h &\le & \|(P_h -P_{h,0})u(t_n)\|_h+\| P_{h,0} u(t_n) -u_h(t_n) \|_h
 \nonumber \\
 &&+ \| u_h(t_n) - \bar u_{h,n} \|_h+\|[P_h K(0)-K_h Q_h]g(t_n)\|_h,
\nonumber
\end{eqnarray}
and combining Theorems \ref{convergence} and \ref{defnorm}, (d), (C.2) and (C.3), we get the estimate for the error of the full discretization that we just  state afterwards. Recalling Theorem \ref{convergence},  notice that the estimate for the time discretization of (\ref{ivpuh}) uses the norm
$$
||| [u_{h,0} ,P_h f,Q_h  g]^T |||_{p+1,t,X_h} = \| u_{h,0}  \|_h + \max_{0 \le j \le p+1} \| P_h f^{(j)}(s) \|_h +  \max_{0 \le j \le p+2} \| Q_h g^{(j)}(s) \|_h,
$$
that, by (S.1), is bounded by $M_L  |||[u,f,g]|||_{p+1,t}$, for $t \ge 0$.

\begin{theorem}
Assume that the solution $w$ of the IBVP (\ref{problem}) belongs to  ${\cal C}_{ub}^1([0,+\infty),W) \cap {\cal C}_{ub}^{p+1}([0,+\infty),X) $,  that $f \in {\cal C}_{ub}^{p+1}([0,+\infty),X)$ and  that $g  \in {\cal C}_{ub}^{p+2}([0,+\infty),Z)$. Let $0 < h \le h_0$, $\tau >0$ and $n \ge 0$. Then, the error of the full discretization can be estimated by
$$
\| \bar w_{h,n} - P_h w(t_n) \|_h \le ERT_n(\tau) + ERS_n(h),
$$
where
$$ ERT_n(\tau) =   C_c\tau^p t_n (1+ 2t_n) \left\{ ||| [w,f,g]^T |||_{p+1,t_n} + \| K(0) \| \| g \|_{p+2,t_n} \right\}, $$
and
$$
ERS_n(h) = t_n M_s  O( \epsilon(h) ) \| w \|_{1,t,W}.
$$
\label{conv_final}
\end{theorem}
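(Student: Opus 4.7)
The plan is to exploit the four-way triangle inequality displayed in the paragraph immediately preceding the theorem,
$$\|P_h w(t_n) - \bar w_{h,n}\|_h \le \|(P_h - P_{h,0})u(t_n)\|_h + \|P_{h,0}u(t_n) - u_h(t_n)\|_h + \|u_h(t_n) - \bar u_{h,n}\|_h + \|[P_h K(0) - K_h Q_h]g(t_n)\|_h,$$
and bound each of the four summands separately with the tools already established, finally grouping two of them into $ERS_n(h)$ and one into $ERT_n(\tau)$.

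For the first summand I would use hypothesis (d), which gives $\|P_h - P_{h,0}\|_{W_0\to X_h}\le C'\|\partial_h P_h\|_{W_0\to Y_h}$; combined with the consistency estimate (C.2), this yields $\|(P_h - P_{h,0})u(t_n)\|_h = O(\epsilon(h))\,\|u(t_n)\|_{W_0}$, and the fundamental estimate (\ref{cotaW0u}) controls $\|u(t_n)\|_{W_0}$ by $\|w\|_{1,t_n,W}$. The fourth summand is immediate from (C.3), bounded by $O(\epsilon(h))\,\|g(t_n)\|_Z$, which is again absorbed into $\|w\|_{1,t_n,W}$ via the continuous embedding $Z\hookrightarrow Y$ applied to $g = \partial w$. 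Both of these contributions, together with the second summand, will feed into $ERS_n(h)$.

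The second summand is handled directly by Theorem \ref{convergencesd}, which already delivers exactly $t_n M_s\, O(\epsilon(h))\,\|w\|_{1,t_n,W}$. The third, purely temporal, summand is obtained by invoking Theorem \ref{convergence} applied to the IVP (\ref{equdoth}), viewed as an instance of (\ref{equdot}) with $A_0$ replaced by $A_{h,0}$, source $f$ replaced by $P_{h,0}f$, operator $K$ replaced by $K_{h,0}$ and boundary datum $g$ by $Q_h g$. This produces the bound $C_c\,\tau^p t_n(1+2t_n)\,|||[u_h, P_{h,0}f, Q_h g]^T|||_{p+1,t_n,X_h}$, and using (S.1) together with the decomposition $u_h = w_h - K_h Q_h g$ this seminorm is dominated by a constant times $|||[w,f,g]^T|||_{p+1,t_n} + \|K(0)\|\,\|g\|_{p+2,t_n}$, which is precisely $ERT_n(\tau)$.

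The main technical obstacle I foresee is ensuring that the stability constant $C_s(r,A_{h,0})$ appearing inside $C_c$ when Theorem \ref{convergence} is applied at the discrete level is uniform in $h$, so that no hidden $h$-dependence leaks into $ERT_n(\tau)$. This uniformity is underwritten by (S.2), which provides a uniform bound $M_s$ on the semigroups $S_{A_{h,0}}$; in the standard situations (dissipative operators on Hilbert spaces, or holomorphic semigroups with uniform sectorial angle) the rational stability constant inherits this uniform bound. A minor bookkeeping subtlety is that the seminorm of $Q_h g$ of order $p+2$ is what forces the regularity hypothesis $g\in\mathcal{C}^{p+2}_{ub}([0,+\infty),Z)$ in the statement, one derivative above what appeared in (\ref{errortw}), since the effective datum of the discretized IVP is $Q_h g$ itself rather than $K(0) g$. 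Collecting the four estimates and absorbing uniform constants into the $O(\cdot)$ notation yields the stated bound $\|\bar w_{h,n} - P_h w(t_n)\|_h \le ERT_n(\tau) + ERS_n(h)$.
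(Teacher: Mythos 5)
Your proposal follows exactly the paper's own argument: the same four-term triangle inequality, with (d) combined with (C.2) for the first summand, Theorem \ref{convergencesd} for the second, Theorem \ref{convergence} applied to the discrete IVP (\ref{equdoth}) together with (S.1) for the third, and (C.3) for the fourth. The only quibble is that the absorption of $\| g(t_n) \|_Z$ into $\| w \|_{1,t_n,W}$ rests on the continuity of $\partial : W \to Z$, which follows from (\ref{WZ}), rather than on the embedding $Z \hookrightarrow Y$; apart from that detail your write-up is a faithful (and somewhat more explicit) rendering of the paper's proof.
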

Again by  Lax--Milgram Theorem, it can be proved that the proposed full discretization of (\ref{problem}) converges  for data $w_0 \in X$, $f \in {\cal C}_{ub}([0,+\infty),X)$ and $g : [0,+\infty) \to Z$ of finite total variation.

\section{Numerical experiments}

The aim of this section is to corroborate the  previous results. We consider the domain $\Omega = (0,1)\times(0,1) \subset \mathbb{R}^2$ and we integrate the following parabolic initial boundary value problem with Dirichlet boundary conditions
\begin{eqnarray}
\left\{ \begin{array}{rcl} w_t(t,x,y) & = & \Delta w(t,x,y) - \sin(x+y+t)  + 2\cos(x+y+t),  \quad (x,y) \in \Omega,\, t \in (0,T), \\
w(0,x,y)  & = & \cos(x+y),  \quad (x,y) \in \Omega,  \\
w(t,x,y) & = & \cos(x+y+t), \quad (x,y) \in \partial \Omega,\, t \in (0,T), \end{array} \right.
\label{problem2}
\end{eqnarray}
for some $T>0$.
This problem has the form of (\ref{problem}) and satisfies (\ref{necreg0D}), so $w$ is a genuine solution of (\ref{problem}) which in fact has as high regularity as required for any of the theorems of the previous section. We notice that
 $X=L^2(\Omega)$, $A=\Delta$, $D(A)=H^2(\Omega)$, $Y=H^{\frac{3}{2}}(\partial \Omega)$  and $A_0=A|_{\mbox{ker}(\partial)}$ is the infinitesimal generator of an analytic semigroup of negative type, such that $D(A_0)=H^2(\Omega)\cap H_0^1(\Omega)$ \cite{pazy}. We also notice that $f$ and $g$ have as much regularity as required in Theorems \ref{convergence} and \ref{conv_final}.

For the space discretization, we use the well-known $4$th-order nine-point formula for the Laplacian \cite{S}, which gives rise to $K_h$ when considering all grid nodes and $K_{h,0}$ just for the interior ones. In such a case, $P_h$ is the projection on the nodes of the grid when applied to a $C^1$-function and, for any function in $L^2(\Omega)$, as $C^1(\Omega)$ is dense in $L^2(\Omega)$, it corresponds to the limit in the discrete $L^2$-norm. The same applies for $Q_h$ and the projection on the nodes of the boundary of the grid. Then, $P_{h,0} f$ is not only basically the projection on the interior points of the grid, but it also contains some information on the nodes of the boundary, as it comes from the nine-point formula. All these operators are thus bounded in the discrete $L^2$-norm and then (S.1) holds. On the other hand, $A_{h,0}$ can be seen to have negative eigenvalues for every $h>0$, so that (S.2) holds in the same norm.

Finally, for $W=H^6(\Omega)$ and $Z=H^{\frac{11}{2}}(\Omega)$, (C.1),(C.3) and (C.4) are well-known to hold with $\varepsilon_h=O(h^4)$ and (C.2) with $\varepsilon_h=0$.

\subsection{SDIRK}

Problem (\ref{problem2}) is first integrated in time by the $3$-stages SDIRK Runge--Kutta method of classical order $p=4$ and stage order $q=1$ \cite{HW} by using the standard method of lines, i.e. discretizing first in space the corresponding problem (\ref{problem}) and then in time. In such a way, three linear systems with sparse matrices  must be solved per step, as well as three evaluations of $f$, the same for $g$ and the same for $g'$. We have integrated till time $T=1/2$ with $N=200$ so that the error in space is negligible and the linear systems have been solved with the iterative gradient conjugate method with $tol=10^{-14}$. The results on the global error turn up in the second column of  Table \ref{exp1SDIRK}. We notice that an order greater than that expected by its stage order \cite{AP2,OR} is obtained (notice that $u_0\in D(A_0^{1/4})$ in this case \cite{pazy}). That is due to a summation-by-parts argument, which in parabolic problems explains that the global order behaves as the local one, instead of one less, when $r_\infty \neq 1$ \cite{HW, OR}. In fact, the local order can be seen to also behave as $O(\tau^{2+1/4})$, as Table \ref{exp1loc} shows.

\begin{table}[htpb]
\centering
\caption{Global errors and order of convergence with SDIRK3 method and suggested
 rational SDIRK3 methods.}
\begin{tabular}{rrrrrrrrr}
& \multicolumn{2}{c}{Runge--Kutta} && \multicolumn{2}{c}{Rational Explicit} && \multicolumn{2}{c}{Rational Implicit}  \\
\multicolumn{1}{l}{ step size} & \multicolumn{1}{l}{error} & \multicolumn{1}{l}{order} & &
\multicolumn{1}{l}{error\rule{0pt}{2.5ex}\rule{0pt}{2.5ex}} & \multicolumn{1}{l}{order} & &
\multicolumn{1}{l}{error\rule{0pt}{2.5ex}\rule{0pt}{2.5ex}} & \multicolumn{1}{l}{order}\\
\hline

1.000e-01  & 2.379e-04  & --          & & 8.721e-07  & -- & & 1.797e-07 & --\\

5.000e-02  & 5.083e-05  & 2.23       & & 6.656e-08  & 3.71 & & 8.792e-09 & 4.35 \\

2.500e-02  & 1.083e-05  & 2.23       & & 4.025e-09  & 3.98 & & 4.874e-10 & 4.17\\

1.250e-02  & 2.254e-06  & 2.26       & & 2.555e-10  & 4.04 & &  3.386e-11 & 3.85\\

6.250e-03  & 4.672e-07  & 2.27       & & 1.569e-11  & 4.03 & & 2.667e-12 & 3.67\\
\label{exp1SDIRK}
\end{tabular}
\end{table}

\begin{table}[htpb]
\centering
\caption{Local errors and local order of convergence with SDIRK3 method and suggested rational SDIRK3 methods.}
\begin{tabular}{rrrrrrrrr}
& \multicolumn{2}{c}{Runge--Kutta} && \multicolumn{2}{c}{Rational Explicit} && \multicolumn{2}{c}{Rational Implicit}  \\
\multicolumn{1}{l}{ step size} & \multicolumn{1}{l}{error} & \multicolumn{1}{l}{order} & &
\multicolumn{1}{l}{error\rule{0pt}{2.5ex}\rule{0pt}{2.5ex}} & \multicolumn{1}{l}{order} & &
\multicolumn{1}{l}{error\rule{0pt}{2.5ex}\rule{0pt}{2.5ex}} & \multicolumn{1}{l}{order}\\
\hline


1.000e-01  & 3.964e-04  & --      & & 1.308e-06  & -- & & 4.727e-08 & -- \\

5.000e-02  & 8.315e-05  & 2.25       & & 5.195e-08  & 4.65 & & 2.223e-09 & 4.41 \\

2.500e-02  & 1.734e-05  & 2.26       & & 2.024e-09  & 4.68 & &  8.524e-11 & 4.70 \\

1.667e-02  & 6.921e-06  & 2.27       & & 2.981e-10  & 4.72 & & 1.239e-11 & 4.76 \\

\label{exp1loc}
\end{tabular}
\end{table}

On the other hand, we have integrated the same problem with the suggested technique in this paper. We have considered two possibilities for the nodes $\mathbf{c}$ and $\mathbf{d}$ in Section 5, which we will denote by explicit and implicit method because of the difference in $\mathbf{c}^n$ for $n\ge 3$:
\begin{eqnarray}
\left\{\begin{array}{rclrcl}
\mathbf{c^0}&=&[0, 1, 2, 3],  &\mathbf{d^0}&=&[0, 1, 2, 3, 4],  \\
 \mathbf{c^1}&=&[-1, 0, 1, 2], & \mathbf{d^1}&=&[-1, 0, 1, 2, 3],  \\
\mathbf{c^2}&=&[-2, -1, 0, 1], & \mathbf{d^2}&=&[-2, -1, 0, 1, 2],  \\
\mathbf{c^n}&=&[-3, -2, -1, 0], & \mathbf{d^n}&=&[-3, -2, -1, 0, 1], \quad \mbox{  explicit method, }  n\ge 3 \\
\mathbf{c^n}&=&[-2, -1, 0, 1], & \mathbf{d^n}&=&[-3, -2, -1, 0, 1], \quad \mbox{ implicit method.} \end{array} \right.\label{cd}
\end{eqnarray}
(Notice that, as $p=4$, $\mathbf{c}^n \in \mathbb{R}^4$ while  $\mathbf{d}^n \in \mathbb{R}^5$. There are many other possibilities but we have chosen these among the ones which just imply at most one function evaluation of $f$ and $g$ per step.) Moreover, as in this case, the matrix in Butcher tableau is $\bar{W}=\delta I+N$ for a certain value $\delta>0$ and a nilpotent matrix $N$, it happens that
\begin{eqnarray}
r(z)&=&1+z \mathbf{b}^T (I-z \bar{W})^{-1} \mathbf{e}=1+z \mathbf{b}^T [(I-z \delta)I -zN]^{-1} \mathbf{e} \nonumber \\
&=&1+\frac{z}{1-z \delta}\mathbf{b}^T [I-\frac{z}{1-z \delta}N]^{-1}\mathbf{e} \nonumber \\
&=& (1-\frac{1}{\delta})+r_{11}\frac{1}{1-z \delta}+r_{12}\frac{1}{(1-z \delta)^2}+r_{13}\frac{1}{(1-z \delta)^3}, \nonumber
\end{eqnarray}
for some values $r_{11}, r_{12}, r_{13}$, taking into account that
$$
\frac{z}{1-z \delta}=-\frac{1}{\delta}+\frac{1}{\delta}\frac{1}{1-z  \delta},$$
and that the method is consistent. In such a way, $k=1$, $m_1=3$ and $w_1=\delta$ in (\ref{rz}). Then,
Lemma 4.3 in \cite{ArP} can be applied to
\begin{eqnarray}
H_{11}(z)&=&\frac{1}{1-z \delta}=1+\delta z +\delta^2 z^2+\delta^3 z^3 +O(z^4), \nonumber \\
H_{12}(z)&=&\frac{1}{(1-z \delta)^2}=1+2\delta z +3\delta^2 z^2+4\delta^3 z^3 +O(z^4), \nonumber \\
H_{13}(z)&=&\frac{1}{(1-z \delta)^3}=1+3\delta z +6\delta^2 z^2+10\delta^3 z^3 +O(z^4), \nonumber \\
I_{11}(z)&=&\frac{z}{1-z \delta}=z+\delta z^2 +\delta^2 z^3+\delta^3 z^4 +O(z^5), \nonumber \\
I_{12}(z)&=&\frac{z}{(1-z \delta)^2}=z+2\delta z^2 +3\delta^2 z^3+4\delta^3 z^4 +O(z^5), \nonumber \\
I_{13}(z)&=&\frac{z}{(1-z \delta)^3}=z+3\delta z^2 +6\delta^2 z^3+10\delta^3 z^4 +O(z^5), \nonumber
\end{eqnarray}
giving rise to the coefficients $\mathbf{\gamma}_{n,1,i} \in \mathbb{R}^4$ and  $\mathbf{\eta}_{n,1,i} \in \mathbb{R}^5$ for $i=1,2,3$, which turn up in (\ref{En}) and (\ref{Fn}).

Then, at each step four linear systems with sparse matrices must be solved and one evaluation of $f$ and $g$ are required. (We notice that the additional linear system to be solved with respect to the Runge--Kutta case comes from the calculation of $K_{h,0}$ in the part corresponding to (\ref{Fn}) when integrating (\ref{equdoth})). Table \ref{exp1SDIRK} shows the global error for several stepsizes for both the explicit and implicit rational implementation suggested in this paper. It can be observed that, in both cases, the order is very near $4$, which is the classical order of the method, as Theorem \ref{conv_final} predicts. On the other hand, the local order corresponding to the error at the first time stepsize when using $\mathbf{c}$ and $\mathbf{d}$ in the last lines of (\ref{cd}) approaches $5$ as $\tau$ diminishes, as Table \ref{exp1loc} shows and as predicted by Lemma \ref{consistence}.

\begin{figure*}
\centerline{
\includegraphics[width=90mm]{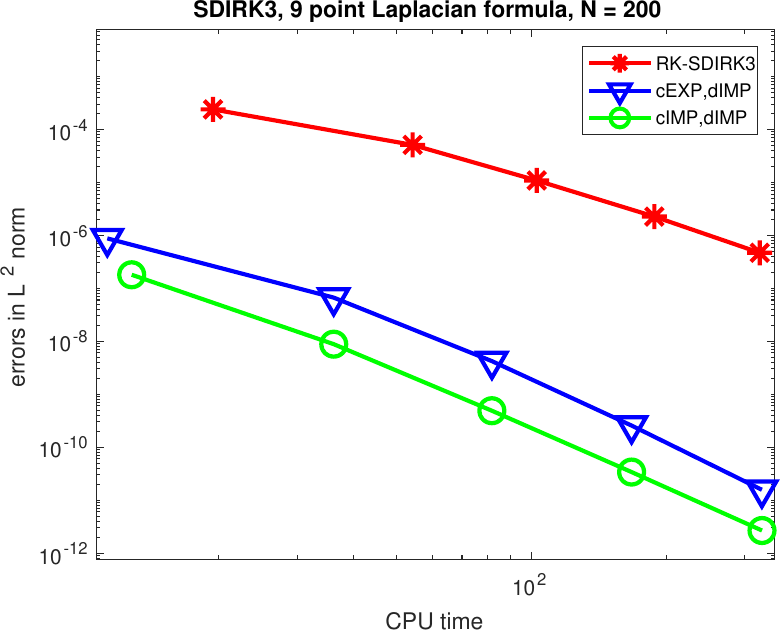}
}
\caption{Error against CPU time when integrating problem (\ref{problem2}) with SDIRK3 method and suggested rational SDIRK3 methods.} \label{f1}
\end{figure*}

Comparing the size of the errors for a fixed stepsize, we can see that, even for the biggest time stepsize $\tau=0.1$, the errors are much smaller with the rational suggestion in this paper than using the Runge--Kutta method through the standard method of lines. As order reduction is also avoided, that difference is even much more significant when $\tau$ diminishes. Moreover, at least in this problem, the size of the error for a same stepsize is quite smaller with the implicit choice of $\mathbf{c}$ than with the explicit one, which is reasonable because of Remark \ref{kappa}. We also show here a comparison in terms of CPU time among the three methods with our implementations in MATLAB. In particular, Figure \ref{f1} shows the global error against CPU time and we can observe that, for the biggest time stepsize, the CPU time which is required with the Runge--Kutta method is bigger than that corresponding to the rational suggestion here. It thus seems that, at least in such a case, the extra evaluations of $f,g,\dot{g}$ and the linear combinations to be done with the Runge-Kutta method take more time than the resolution of an extra linear system in the rational methods. In any case, for this problem and a fixed cpu time, the difference in accuracy is of four orders of accuracy for the bigger time-stepsizes between the Runge--Kutta method and the implicit rational method and of five orders of accuracy for the smallest stepsizes. And what is also very important, $\dot{g}$ has not been required with the rational suggestion.

\subsection{Gauss}

We have also taken as time integrator the three-stages Gauss Runge--Kutta method, which is well-known to have classical order $p=6$ and stage order $q=3$ \cite{HW}. When using it to integrate (\ref{problem2}) till time $T=1$, order $3.25$ for the global error would be expected in general according to \cite{AP2}. However, in the same way as for DIRK method, a summation by parts argument applies because the problem is parabolic and the global order is one order higher, i.e. 4.25, as it can be approximately observed in the third column of Table \ref{exp1Gauss}. We notice that in this case, since the matrix in Butcher tableau diagonalizes and $r(-\infty)=-1$, $r(z)$ can be written as
$$r(z)=-1+\frac{r_{11}}{1-w_1 z}+  \frac{r_{21}}{1-w_2 z}+\frac{r_{31}}{1-w_2 z},$$
with $r_{l,1}\in \mathbb{R}$ and $w_l>0$ for $l=1,2,3$. Using this, the implementation of the Runge--Kutta method means solving,  at each step,
three linear systems with sparse matrices as well as the three evaluations of $f(t_n+c_l \tau)$,  $g(t_n+c_l \tau)$ and $\dot{g}(t_n+c_l \tau)$ for $l=1,2,3$.

\begin{table}[htpb]
\centering
\caption{Errors and order of convergence with  Gauss3 method and the suggested rational Gauss3 methods.}
\begin{tabular}{lllllllllll}
& \multicolumn{2}{c}{Runge--Kutta} & \multicolumn{2}{c}{Explicit Rational} & \multicolumn{2}{c}{Implicit Rational } & \multicolumn{2}{c}{Centered Rational} & \multicolumn{2}{c}{Chebyshev-Centered Rat. } \\
\multicolumn{1}{l}{ step size} & \multicolumn{1}{l}{error} & \multicolumn{1}{l}{order} & &
\multicolumn{1}{l}{error\rule{0pt}{2.5ex}\rule{0pt}{2.5ex}} & \multicolumn{1}{l}{order} & &
\multicolumn{1}{l}{error\rule{0pt}{2.5ex}\rule{0pt}{2.5ex}} & \multicolumn{1}{l}{order}\\
\hline

1.000e-01  & 6.117e-06  & --       &  2.276e-08  & -- &  1.712e-09 & -- & 7.636e-11 & -- & 1.027e-10 & --\\

6.667e-02  & 1.105e-06  & 4.22       &  2.029e-09  & 5.96 &  1.291e-10 & 6.38 & 5.593e-12 & 6.45 &  6.369e-12 & 6.86 \\

5.000e-02  & 3.131e-07  & 4.38       &  3.576e-10  & 6.03 &   2.100e-11 & 6.31 &  8.259e-13& 6.65&  9.140e-13& 6.75\\

4.000e-02  & 1.225e-07  & 4.20       &  9.245e-11  & 6.06 &  5.408e-12 & 6.08 &  1.654e-13& 7.21&  4.390e-13 & *\\

3.333e-02  & 5.508e-08  & 4.39       &  3.059e-11  & 6.07 &  1.819e-12 & 5.98 &  1.737e-13 & *&  2.508e-13 & *\\
\label{exp1Gauss}
\end{tabular}
\end{table}

\begin{figure*}
\centerline{\includegraphics[width=90mm]{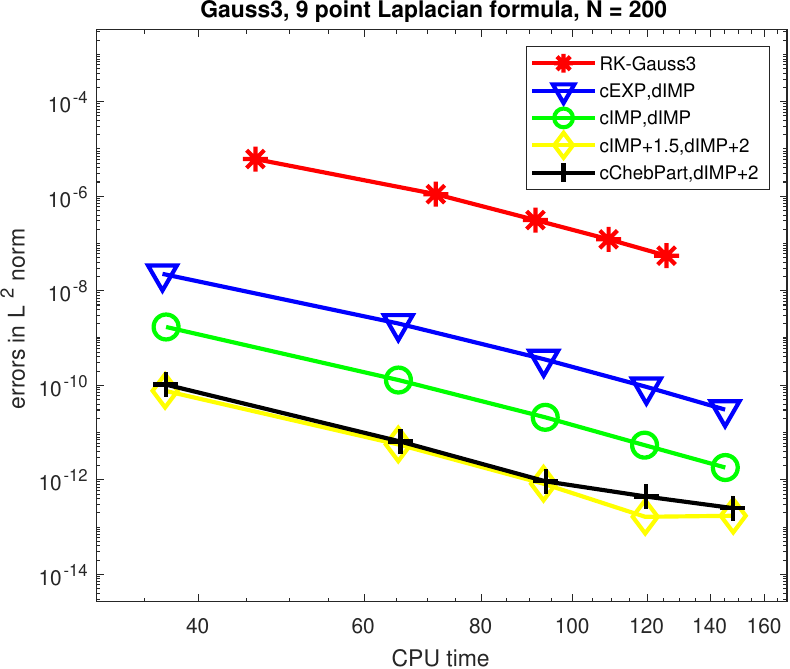}}
\caption{Error against CPU time when integrating problem (\ref{problem2}) with GAUSS3 method and suggested rational GAUSS3 methods.} \label{f2}

\end{figure*}

The global errors which are obtained with the rational methods can be observed in Table \ref{exp1Gauss}, where now, more diversity of $\mathbf{c}$ and $\mathbf{d}$ has been taken, considering Remark \ref{kappa}. More precisely, the equivalent choice to (\ref{cd}) with
\begin{eqnarray}
\left\{\begin{array}{rclrcl}
\mathbf{c^n}&=&[-5,-4, -3, -2, -1, 0], & \mathbf{d^n}&=&[-5,-4,-3, -2, -1, 0, 1], \quad \mbox{  explicit method, }  n\ge 5, \\
\mathbf{c^n}&=&[-4,-3,-2, -1, 0, 1], & \mathbf{d^n}&=&[-5,-4,-3, -2, -1, 0, 1], \quad \mbox{ implicit method, }
\end{array} \right.\nonumber
\end{eqnarray}
and the centered integer and half-integer nodes
\begin{eqnarray}
\mathbf{c^n}=[-2.5,-1.5,-0.5, 0.5, 1.5, 2.5], \quad \mathbf{d^n}=[-3, -2, -1, 0, 1,2,3], \quad \mbox{ centered method, } n\ge 3.\label{centered}
\end{eqnarray}
With these choices, four linear systems with sparse matrices  must be solved at each step and, by keeping the calculation from one step to the other, just one function evaluation of $f$ and $g$.

Finally, we have also considered the method for which $\mathbf{c}$ corresponds to the roots of  Chebyshev polynomial $T_3(x)$ relocated in the intervals $[-1,0]$ and $[0,1]$. In such a way, at each step  three new function evaluations of $f$ are performed, in the same way as for the corresponding Runge-Kutta method. On the other hand, $d$ is taken again with integer centered nodes, so that, as with the previous choices of nodes, just one new function evaluation of $g$ is required per step in comparison with the three required for the Runge-Kutta method. More precisely,
\begin{eqnarray}
\left\{\begin{array}{rcl}
\mathbf{c^n}&=&[\frac{1}{2}[-\frac{\sqrt{3}}{2}-1],-\frac{1}{2},\frac{1}{2}[\frac{\sqrt{3}}{2}-1],\frac{1}{2}[-\frac{\sqrt{3}}{2}+1] ,\frac{1}{2} ,\frac{1}{2}[\frac{\sqrt{3}}{2}+1]], \\
 \mathbf{d^n}&=&[-3, -2, -1, 0, 1,2,3], \hspace{5cm} \mbox{Chebyshev-centered method}, n \ge 3.
\end{array} \right.
\label{cheby-center}
\end{eqnarray}

It is clear that no order reduction is observed, i.e. the errors diminish like $O(\tau^6)$ and even more for the choices (\ref{centered}) and (\ref{cheby-center}). The reason for the latter maybe that the error coming from (\ref{lema43}) is so small that what can be seen is the error just associated to the rational method when integrating a linear homogeneous problem. It can be checked that the error constant for Gauss method of order $6$ is so small for these problems that the method nearly behaves as a $7th$-order method.

Moreover, the size of the errors is much smaller than with the Runge--Kutta method with any of the rational choices. As for the comparison in efficiency, Figure \ref{f2} shows that, for the biggest time stepsize, the Runge-Kutta implementation is also more expensive than all rational suggestions. That difference is made smaller when $\tau$ diminishes (perhaps because the linear systems take less time to be solved). In any case, as the errors are so small with the rational methods, with a fixed CPU time, the accuracy which is achieved with the explicit rational method is three orders of magnitude smaller than with the Runge--Kutta method, four orders of magnitude smaller with the implicit rational one and more than five orders of magnitude smaller with the choices (\ref{centered}) and (\ref{cheby-center}). Besides, we remark again that $\dot{g}$ has not been necessary with the natural suggestions.

{\bf Acknowledgements} C.Arranz-Sim\'on and B. Cano have been supported by Ministerio de Ciencia e Innovaci\'on through Project PID2023-147073NB-I00.

\end{document}